\newtheorem{thm1}{Theorem}[section]
\newtheorem{theorem}[thm1]{Theorem}
\newtheorem{lemma}[thm1]{Lemma}
\newtheorem{corollary}[thm1]{Corollary}
\newtheorem{proposition}[thm1]{Proposition}
\newtheorem{thmx}{Theorem}
\theoremstyle{definition}
\newtheorem{definition}[thm1]{Definition}
\newcounter{exampleendflag}
\newcommand{\exendhere}{
  \setcounter{exampleendflag}{0} 
  \ifmmode
    \eqno
    \ensuremath{\blacktriangle}
  \else
    \hspace{\stretch{1}}
    \ensuremath{\blacktriangle}
  \fi
}
\newenvironment{example}{
  \setcounter{exampleendflag}{1}
  \begin{exx}
}{
  \ifthenelse{\value{exampleendflag}=1}{\exendhere}{} 
  \end{exx}
}
\theoremstyle{remark}
\newtheorem{remark}[thm1]{Remark}
\newtheorem{exx}[thm1]{Example}
\title{An intrinsic definition of the Rees algebra of a module}%This is the Rees algebra of a module}
\author{\gss}
\thanks{The author is supported by the Swedish Research Council, grant number 2011-5599.}
\subjclass[2010]{Primary 13A30, Secondary  13C12}
\keywords{Rees algebra, divided powers}%, coherent functors}
\newcommand{\saeden}{S\ae d\'en}
\newcommand{\stahl}{St\aa hl}
\newcommand{\gss}{Gustav \saeden\ \stahl}
\newcommand{\fF}{\mathcal{F}}
\newcommand{\fR}{\mathcal{R}}
\newcommand{\sym}{\operatorname{Sym}}
\newcommand{\moda}{\mathbf{{Mod}}_A}
\newcommand{\funa}{\mathbf{{Fun}}_A}
\newcommand{\grd}{{}^\vee}
\renewcommand{\bar}{\mathbf}
\newcommand{\im}{{\operatorname{im}}}  
\renewcommand{\hom}{\operatorname{Hom}}
\newcommand{\id}{\operatorname{id}}
\renewcommand{\phi}{\varphi}
\renewcommand{\epsilon}{\varepsilon}
\newcommand{\N}{\mathbb{N}}
\newcommand{\C}{\mathbb{C}}
\numberwithin{equation}{section}
\address{Department of Mathematics, KTH Royal Institute of Technology, SE-100 44 Stockholm, Sweden}
\email{gss@math.kth.se}
\begin{document}
\begin{abstract}
This paper concerns a generalization of the Rees algebra of ideals due to Eisenbud, Huneke and Ulrich that works for any finitely generated module over a noetherian ring. 
%We show that several properties of this theory become more transparent using the category of coherent functors rather than working directly with modules.
Their definition is in terms of maps to free modules. We give an intrinsic definition using divided powers.%, using divided powers, we are able to give an intrinsic definition of this Rees algebra of modules.
\end{abstract}
\maketitle

\section*{Introduction}

Many definitions of Rees algebras of modules have been suggested, see e.g.\ \cite{MR1401523} and \cite{reesmodules}, as well as generalized Rees algebras of ideals as in \cite{MR2097164}. In this paper we study the definition of the Rees algebra of any finitely generated module over a noetherian ring due to Eisenbud, Huneke and Ulrich from \cite{reesbud}. Their definition is in terms of maps to free modules and one of our main results is that we find an intrinsic definition in terms of divided powers.
\begin{thmx}\label{intro:1}
Let $A$ be a noetherian ring and let $M$ be a finitely generated $A$-module. The Rees algebra $\fR(M)$ of $M$, as defined in \cite{reesbud}, is equal to the image of the canonical map
\[\sym(M)\to\Gamma(M^\ast)^\vee\]
of graded $A$-algebras. Here, $\sym(M)$ denotes the symmetric algebra of $M$ and 
\[\Gamma(M^\ast)^\vee=\bigoplus_{n\ge0}\hom_A\Bigl(\Gamma^n\big(\hom_A(M,A)\big),A\Bigr)\] denotes the graded dual of the algebra of divided powers $\Gamma(M^\ast)$ of the dual of the module $M$.
\end{thmx}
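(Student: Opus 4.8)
The plan is to reduce the statement, via a versal map, to the classical self-duality between symmetric powers and divided powers. Recall the construction of \cite{reesbud}: since $A$ is noetherian and $M$ is finitely generated, $M^\ast=\hom_A(M,A)$ is finitely generated, so a choice of generators of $M^\ast$ yields a map $\phi\colon M\to F$ into a \emph{finite} free $A$-module $F$ for which the dual $\phi^\ast\colon F^\ast\to M^\ast$ is surjective (a \emph{versal} map), and one sets $\fR(M)=\im\bigl(\sym(\phi)\colon\sym(M)\to\sym(F)\bigr)$, which \cite{reesbud} shows is, as a quotient of $\sym(M)$, independent of the chosen versal map. I will combine this with two standard facts about the $\sym$--$\Gamma$ duality. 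First, for every $A$-module $N$ there is a canonical map of graded $A$-algebras $c_N\colon\sym(N)\to\Gamma(N^\ast)^\vee$, natural in $N$, where the product on $\Gamma(N^\ast)^\vee$ is the one dual to the deconcatenation coproduct of $\Gamma(N^\ast)$; in degree $n$ it is induced by the pairing $\sym^n(N)\otimes\Gamma^n(N^\ast)\to A$ extending the evaluation $N\otimes N^\ast\to A$. This $c_M$ is the canonical map in the statement. Second, $c_F$ is an isomorphism whenever $F$ is finite free --- this is the classical Hopf duality pairing the polynomial algebra $\sym(F)$ with the divided power algebra $\Gamma(F^\ast)$.

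Applying $c$ to the versal map $\phi\colon M\to F$ gives the square
\[\xymatrix{
\sym(M)\ar[r]^-{c_M}\ar[d]_{\sym(\phi)} & \Gamma(M^\ast)^\vee\ar[d]^{\Gamma(\phi^\ast)^\vee}\\
\sym(F)\ar[r]^-{c_F}_-{\sim} & \Gamma(F^\ast)^\vee
}\]
whose right vertical arrow is, in degree $n$, the dual $\hom_A\bigl(\Gamma^n(\phi^\ast),A\bigr)$ of $\Gamma^n(\phi^\ast)\colon\Gamma^n(F^\ast)\to\Gamma^n(M^\ast)$. Commutativity is exactly naturality of the pairings: $\langle\sym^n(\phi)(x),\xi\rangle=\langle x,\Gamma^n(\phi^\ast)(\xi)\rangle$ for $x\in\sym^n(M)$ and $\xi\in\Gamma^n(F^\ast)$. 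The bottom arrow $c_F$ is an isomorphism by the second fact. To control the right vertical arrow: $\phi^\ast$ is surjective by versality, the functor $\Gamma^n$ carries surjections to surjections, and $\hom_A(-,A)$ is left exact; hence $\Gamma^n(\phi^\ast)$ is surjective and $\hom_A\bigl(\Gamma^n(\phi^\ast),A\bigr)$ is injective for every $n$, so $\Gamma(\phi^\ast)^\vee$ is injective.

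Now chase the square. Since $c_F$ is an isomorphism and $\Gamma(\phi^\ast)^\vee$ is injective,
\[\ker\bigl(\sym(\phi)\bigr)=\ker\bigl(c_F\circ\sym(\phi)\bigr)=\ker\bigl(\Gamma(\phi^\ast)^\vee\circ c_M\bigr)=\ker(c_M),\]
so $\sym(\phi)$ and $c_M$ are quotient maps out of $\sym(M)$ with the same kernel, and the composite $c_F^{-1}\circ\Gamma(\phi^\ast)^\vee$ restricts to an isomorphism of graded $A$-algebras $\im(c_M)\xrightarrow{\ \sim\ }\im\bigl(\sym(\phi)\bigr)=\fR(M)$. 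Because $c_M$ does not involve $\phi$, this also re-exhibits $\fR(M)$ as a quotient of $\sym(M)$ independently of $\phi$, and shows the identification with $\im(c_M)$ is canonical. I expect the real work to lie in the two ``standard facts'': verifying that $c_N$ is multiplicative, i.e.\ that the symmetric/divided-power pairing is a pairing of bialgebras matching the product of $\sym(N)$ with the convolution product on $\Gamma(N^\ast)^\vee$, and that $c_F$ is an isomorphism for finite free $F$; together with the fact that $\Gamma^n$ preserves surjections. Note finally that finite generation of $M^\ast$, hence the noetherian hypothesis, is used precisely to guarantee a \emph{finite} free $F$, so that $c_F$ is an honest isomorphism. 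Granting these inputs, the comparison above is a short diagram chase.
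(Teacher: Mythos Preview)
Your proposal is correct and follows essentially the same route as the paper: choose a versal map $\phi\colon M\to F$, use that $\Gamma$ preserves surjections and that dualization is left exact to get $\Gamma(M^\ast)^\vee\hookrightarrow\Gamma(F^\ast)^\vee$, identify $\Gamma(F^\ast)^\vee\cong\sym(F)$ for finite free $F$, and then read off $\ker(\sym(\phi))=\ker(c_M)$ from the resulting diagram. The only cosmetic difference is that the paper presents the diagram as a factorization triangle and verifies it equals $\sym(\phi)$ by checking degree~$1$ and invoking the universal property of $\sym$, whereas you obtain commutativity directly from naturality of the pairing $c_{(-)}$; the ``standard facts'' you flag are exactly what the paper establishes in its preparatory sections.
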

%The map $M\mapsto \fR(M)$ naturally extends to a functor from the category of finitely generated $A$-modules to the category of $A$-algebras. %This functor will preserve surjections but not injections. 
%Working with coherent functors, we introduce, for any module $M$, the functor $\fG_M=\im\big(M\otimes_A(-)\to\hom_A(M^\ast,-)\big)$. Using these functors we obtain nice characterizations of properties of the Rees algebra of $M$ that are not available in the category of modules. For instance, we have the following result.
%\begin{thmx}\label{thm:intro2}
%Let $A$ be a noetherian ring and let $M\to N$ be a homomorphism of finitely generated $A$-modules. If the induced morphism $\fG_M\to\fG_N$ is injective (resp. surjective), then $\fR(M)\to\fR(N)$ is injective (resp. surjective).
%\end{thmx}
%In particular, letting $N=F$ be free, a homomorphism $M\to F$, such that the induced morphism $\fG_M\to\fG_F$ is injective, is a versal map in the terminology of \cite{reesbud}. Given such a map, the theorem implies that  $\fR(M)\to\fR(F)=\sym(F)$ is injective, recovering another result of \cite{reesbud}, namely that the Rees algebra of $M$ can be computed as $\fR(M)=\im\big(\sym(M)\to\sym(F)\big)$. 

\par\vspace{\baselineskip}
\noindent\textbf{Background.} When resolving singularities in algebraic geometry, the concept of blow-ups of schemes is of great importance. % when resolving singularities in algebraic geometry.
 Given a scheme $X$ and a closed subscheme $Z$, the blow-up of $X$ along $Z$ is a new scheme $\operatorname{Bl}_Z(X)$ where $Z$ is replaced by the exceptional divisor. The blow-up is computed by taking the projective spectrum of the Rees algebra $R(\mathcal{I})=\bigoplus_{n\ge0} \mathcal{I}^n$ of the coherent sheaf of ideals $\mathcal{I}$ that corresponds to the closed subscheme~$Z$. This construction of the Rees algebra only works for ideals, but, as mentioned earlier, several generalizations have been considered. % attempts of finding more general constructions with similar properties that works for other objects than ideals. 
The first thing to note is that the Rees algebra of an ideal $I$ of a ring $A$ can be defined as the image of the map given by applying the symmetric algebra functor to the inclusion map $I\hookrightarrow A$, that is, as the image of the induced map $\sym(I)\to\sym(A)$. There are therefore many relations between the Rees algebra and the symmetric algebra. Since the symmetric algebra is defined for any $A$-module, this gives possibilities of natural generalizations. 

As the symmetric algebra functor does not preserve injections, the Rees algebra of an ideal $I$ is in general not equal to $\sym(I)$. Thus, to generalize the Rees algebra, some more sophisticated constructions need to be used, other than just considering the symmetric algebra. 
For instance, in \cite{MR1401523}, the authors give a definition for the Rees algebra of a finitely generated torsion-free module over a two-dimensional regular local ring with infinite residue field by embedding the module into a certain free module and taking the image of the map given by applying the symmetric algebra functor on this embedding. The authors of \cite{reesmodules} generalize this to a definition of the Rees algebra of a finitely generated module $M$ over a noetherian ring $A$, such that $M$ is free of constant rank when localizing at the associated primes of $A$, as the symmetric algebra of $M$ modulo its $A$-torsion. This does however not work in general as it may differ from the usual definition of the Rees algebra of an ideal when $A$ is not an integral domain. 
There are more examples of these kinds of generalizations, see e.g.\ \cite{MR1774764}, \cite{MR1806775}, \cite{MR1661272}, \cite{MR1709870}, \cite{MR1352554}, \cite{MR1308016}, \cite{MR878892} and \cite{MR1275840}. 

For completeness sake, even though we do not consider these here, we also mention that there are constructions that generalize the Rees algebra of an ideal in other directions, that is, not finding definitions that work for modules, but instead finds constructions over ideals with other properties than that of the Rees algebra. See for instance the quasi-symmetric algebra of an ideal defined in \cite{MR2097164}. 

Finally, in \cite{reesbud}, the authors give the generalization that we will study. It is a generalization of the Rees algebra which works for any finitely generated module over a noetherian ring, and it has the additional property of being functorial. To be more precise, they define the Rees algebra of a module $M$ as $\fR(M)=\sym(M)/\cap_gL_g$, where $g$ runs over all maps from $M$ to any free module $E$, and $L_g=\ker(\sym(g))$. Their definition is a true generalization of the Rees algebra of ideals and it also generalizes many of the constructions mentioned above. We will here study this definition with the purpose of finding another, equivalent, definition which is not expressed as the quotient of an infinite intersection. This we do  using  the theory of divided powers. 

\par\vspace{\baselineskip}
\noindent\textbf{Structure of the article.} We begin in Section~\ref{sec:vers} by reviewing the results of \cite{reesbud}. We study their definition of the Rees algebra of a finitely generated module over a noetherian ring and the notion of versal maps.

%In Section~\ref{sec:torsionless} we discuss the torsionless quotient of a module $M$, which is defined as the image of the canonical map from $M$ to its double dual. We find that any versal map will factor through this quotient and show that it is connected to the Rees algebra of $M$.

%Sections~\ref{sec:verfunc} and \ref{sec:tf} are focused on the study of relations between coherent functors, such as $h^M=\hom_A(M,-)$ and $t_M=M\otimes_A(-)$, and the Rees algebra. We give a characterization of versal maps in terms of coherent functors and find that their defining properties become more transparent in this setting. Generalizing the concept of the torsionless quotient of a module $M$ to a torsionless functor $\fG_M$, we show Theorem~\ref{thm:intro2} and that a versal map $M\to F$ is equivalent to an injection $\fG_M\hookrightarrow\fG_F$. 

In Section~\ref{sec:bialg} we study general functors between the category of finitely generated $A$-modules and the category of $A$-algebras. We show that any right-exact functor $\fF$ gives a natural bialgebra structure on $\fF(M)$, for any module $M$. Furthermore, we show that the graded dual of a graded bialgebra has a canonical algebra structure.

In Section~\ref{sec:dp} we apply the general results of the previous section to the symmetric algebra and the algebra of divided powers, and show how these results give connections between the two algebra functors. 

Finally, in Section~\ref{sec:rees} we arrive at the result stated in Theorem~\ref{intro:1}, that shows that the Rees algebra of a finitely generated module $M$ over a noetherian ring can be defined as the image of the canonical map $\sym(M)\to\Gamma(M^\ast)^\vee$.

\par\vspace{\baselineskip}
\noindent\textbf{Acknowledgement.} I would like to thank D.\ Rydh for all the discussions that has made this article possible. Also, I am thankful to M.\ Boij for his help with the computer calculations and to R.\ Skjelnes for his helpful comments on this text. %Finally, I thank the Swedish Research Council for financing my research (grant number 2011-5599).

\section{Versal maps and the Rees algebra of a module}\label{sec:vers}\label{sec:1}
A ring will here always describe a commutative ring with a unit and an algebra over such a ring will always mean an associative commutative unital algebra. Throughout this paper, $A$ will denote a noetherian ring. We let $\sym(M)$ denote the symmetric algebra of a module $M$. In the paper \cite{reesbud}, the authors defined the Rees algebra of a finitely generated module over $A$ as follows.
\begin{definition}[\cite{reesbud}, Definition~0.1]\label{def:rees}
Let $M$ be a finitely generated $A$-module. We define the \emph{Rees algebra of $M$} as 
\[\fR(M)= \sym(M)/\cap_gL_g\]
where the intersection is taken over all homomorphisms $g\colon M\to E$ where $E$ runs over all free modules and $L_g=\ker\bigl(\sym(g)\colon\sym(M)\to\sym(E)\bigr)$.
\end{definition}
\begin{example}
If $M$ is free, then, as $g$ runs over all morphisms to free modules, the homomorphism $\id\colon M\to M$ is a homomorphism from $M$ to a free module and \[\sym(\id)\colon\sym(M)\to\sym(M)\] will be the identity, which is injective, so taking the intersection over all $g$ gives that \[\cap_gL_g=0.\]
Thus, if $M$ is free, then $\fR(M)=\sym(M)$.
\end{example}

One advantage of this definition is that it is functorial, that is, it gives a functor $\fR$ from the category of finitely generated $A$-modules to the category of $A$-algebras. Our goal will be to give another, equivalent, definition of this Rees algebra, which is more intrinsic to $M$ and not defined by mappings from $M$ to free modules. This we do in Section~\ref{sec:rees}. 

\begin{remark}\label{rem:psur}
Since the symmetric algebra is graded, the Rees algebra is also graded. Furthermore, the symmetric algebra preserves surjections and it follows that the Rees algebra does as well. 
\end{remark}

To compute the Rees algebra of a module $M$, the authors of \cite{reesbud} introduced the notion of a versal homomorphism.
\begin{definition}[\cite{reesbud}, Definition~1.2]
Let $M$ be a finitely generated $A$-module and let $F$ be a finitely generated and free $A$-module. A homomorphism $\phi\colon M\to F$ is \emph{versal} if any homomorphism $M\to E$, where $E$ is free, factors via $\phi$.
\end{definition}

\begin{lemma}\label{lem:versalgerrees}
If $\phi\colon M\to F$ is versal, then
\[\fR(M)=\fR(\phi)\]
where $\fR(\phi)=\im\bigl(\sym(\phi)\colon\sym(M)\to\sym(F)\bigr)$. 
\end{lemma}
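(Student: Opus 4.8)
The statement to prove is that if $\phi\colon M\to F$ is versal, then $\fR(M)=\fR(\phi)=\im(\sym(\phi))$. Since $\fR(M)=\sym(M)/\cap_g L_g$ with $L_g=\ker(\sym(g))$, and $\fR(\phi)=\im(\sym(\phi))\cong\sym(M)/\ker(\sym(\phi))=\sym(M)/L_\phi$, it suffices to prove the equality of ideals $\cap_g L_g = L_\phi$ in $\sym(M)$.

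The plan is as follows. First, the inclusion $\cap_g L_g\subseteq L_\phi$ is immediate: since $F$ is itself a free module, $\phi$ is one of the maps $g$ appearing in the intersection, so the intersection is contained in $L_\phi$. The content is the reverse inclusion $L_\phi\subseteq\cap_g L_g$, i.e.\ $L_\phi\subseteq L_g$ for every homomorphism $g\colon M\to E$ with $E$ free. Here I would use versality: given such a $g$, there is a factorization $g=h\circ\phi$ for some $h\colon F\to E$. Applying the symmetric algebra functor, which is functorial, gives $\sym(g)=\sym(h)\circ\sym(\phi)$. Therefore if $x\in\sym(M)$ satisfies $\sym(\phi)(x)=0$, then $\sym(g)(x)=\sym(h)(\sym(\phi)(x))=\sym(h)(0)=0$, so $x\in L_g$. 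This shows $L_\phi\subseteq L_g$ for all $g$, hence $L_\phi\subseteq\cap_g L_g$, completing the proof.

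To turn the ideal equality into the algebra statement, I would note that $\sym(\phi)$ induces an isomorphism $\sym(M)/L_\phi\xrightarrow{\sim}\im(\sym(\phi))=\fR(\phi)$ of graded $A$-algebras, and combine this with $\cap_g L_g=L_\phi$ and Definition~\ref{def:rees} to conclude $\fR(M)=\sym(M)/\cap_g L_g=\sym(M)/L_\phi\cong\fR(\phi)$.

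I do not expect any real obstacle here: the argument is a formal consequence of functoriality of $\sym$ together with the defining property of a versal map, and the only mild subtlety is just bookkeeping to make sure that $F$ being free allows $\phi$ itself to be taken as one of the $g$'s in the intersection (so that one gets equality rather than merely an inclusion). One might also remark that versal maps exist when $M$ is finitely generated over the noetherian ring $A$ — but that is an existence issue addressed elsewhere and is not needed for this lemma, whose hypothesis simply posits a versal $\phi$.
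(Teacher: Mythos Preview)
Your proposal is correct and follows essentially the same argument as the paper: use versality to factor every $g$ through $\phi$, apply functoriality of $\sym$ to get $L_\phi\subseteq L_g$, and combine with the trivial inclusion (since $\phi$ itself is one of the $g$'s) to conclude $\cap_g L_g=L_\phi$. The only difference is expository, as you spell out both inclusions separately whereas the paper states the equality directly after establishing $L_\phi\subseteq L_g$.
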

\begin{proof}
If $\phi\colon M\to F$ is versal, then any $g\colon M\to E$, where $E$ is free, factors via $\phi$, that is, $g=h\circ\phi$ for some $h\colon F\to E$. By the functoriality of the symmetric algebra we get that $\sym(g)=\sym(h)\circ\sym(\phi)$.
Thus, $L_\phi=\ker(\sym(\phi))\subseteq\ker(\sym(g))=L_g$ for all $g$, so
\[\cap_gL_g=L_\phi=\ker\bigl(\sym(\phi)\bigr)\]
and therefore, 
\[\fR(M)=\sym(M)/\cap_gL_g=\sym(M)/\ker(\sym(\phi))=\im(\sym(\phi))=\fR(\phi).\qedhere\]
\end{proof}

\begin{remark}\label{ejversal}
Given an ideal $I\subseteq A$, the inclusion $I\hookrightarrow A$ is \emph{not} always versal. For instance, the inclusion $(x)\hookrightarrow \C[x]$ is not versal since the homomorphism $(x)\to \C[x]$ defined by $x\mapsto1$ does not factor through it. Indeed, there is no $\C[x]$-module homomorphism $\C[x]\to \C[x]$ sending $x$ to~$1$. Nevertheless, the Rees algebra $\fR(I)$ of an ideal $I$ can still be calculated by this inclusion map as the following result shows. 
\end{remark}
\begin{theorem}[\cite{reesbud}, Theorem 1.4]\label{thm:inclusion}
Let $A$ be a noetherian ring, let $I$ an ideal of $A$ and let $i\colon I\to A$ be the inclusion map. Then, $\fR(I)=\fR(i)=\im\bigl(\sym(i)\colon\sym(I)\to\sym(A)\bigr)$ is the classical Rees algebra of $I$.
\end{theorem}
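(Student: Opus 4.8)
The plan is to treat the two assertions ``$\fR(I)=\fR(i)$'' and ``$\fR(i)$ is the classical Rees algebra'' separately, dealing with the second one first since it is essentially a matter of unwinding definitions. As an $A$-module $A$ is free of rank one, so $\sym(A)=A[t]$ and $\sym^n(A)=At^n$, and the homomorphism $\sym^n(i)\colon\sym^n(I)\to\sym^n(A)$ sends a product of symbols $x_1\cdots x_n$ (with $x_j\in I$) to the element $(x_1\cdots x_n)t^n$, the product now being formed in $A$; hence its image is precisely $I^nt^n$. Therefore $\fR(i)=\im(\sym(i))=\bigoplus_{n\ge0}I^nt^n=A[It]$ is the classical Rees algebra of $I$, and what remains is the identity $\fR(I)=\fR(i)$.

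Since $A$ is a free $A$-module, $i$ is one of the homomorphisms $g$ occurring in Definition~\ref{def:rees}, so $\cap_gL_g\subseteq L_i$ and there is a canonical surjection $\fR(I)=\sym(I)/\cap_gL_g\twoheadrightarrow\sym(I)/L_i=\fR(i)$. The whole point is to show this map is injective, equivalently that $L_i\subseteq L_g$ for every homomorphism $g\colon I\to E$ with $E$ free; replacing $E$ by the free direct summand spanned by the finitely many coordinates occurring in $g(I)$, we may assume $E=A^r$. So fix such a $g=(g_1,\dots,g_r)$ with $g_l\in\hom_A(I,A)$, and fix $\xi\in\sym^n(I)$, written $\xi=\sum_\alpha x_1^{(\alpha)}\cdots x_n^{(\alpha)}$ with $x_j^{(\alpha)}\in I$, such that $\sym^n(i)(\xi)=\sum_\alpha x_1^{(\alpha)}\cdots x_n^{(\alpha)}=0$ in $A$; we must show $\sym^n(g)(\xi)=0$ in $\sym^n(E)$.

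Here one uses the single feature of ideals not available for general modules: for $\psi\in\hom_A(I,A)$ and $a,b\in I$ one has $\psi(a)b=\psi(ab)=\psi(b)a$, which componentwise says $a\,g(b)=b\,g(a)$ in $E$ for all $a,b\in I$. Shuffling the scalars $y_j$ through the product using this identity gives, for all $y_1,\dots,y_n\in I$,
\[\sym^n(g)(\xi)\cdot(y_1\cdots y_n)=\sum_\alpha\bigl(x_1^{(\alpha)}\cdots x_n^{(\alpha)}\bigr)\,g(y_1)\cdots g(y_n)=0,\]
so $\sym^n(g)(\xi)$ is annihilated by $I^n$. As $\sym^n(E)$ is a free $A$-module it suffices to see that $\sym^n(g)(\xi)$ vanishes after localizing at each $\mathfrak p\in\operatorname{Ass}(A)$, and when $I\not\subseteq\mathfrak p$ this is automatic because then $(I^n)_{\mathfrak p}=A_{\mathfrak p}$. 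The remaining case $I\subseteq\mathfrak p\in\operatorname{Ass}(A)$ reduces, after localization, to $A$ local of depth zero with $I$ inside the maximal ideal; there one first notes that no $g_l$ can take a unit value (by the identity above, a unit value of $g_l$ would force $I$ to be principal with a zerodivisor generator, which is absurd), so the coefficients of $\sym^n(g)(\xi)$ lie in the maximal ideal, and a closer analysis of this local, depth-zero situation completes the argument. I expect precisely this last step --- upgrading ``$\sym^n(g)(\xi)$ is killed by $I^n$'' to ``$\sym^n(g)(\xi)=0$'' in the depth-zero local case --- to be the genuine obstacle; everything else is formal once one has the multiplication identity and the reduction to associated primes in hand. (Alternatively one may organise the whole reduction around a versal homomorphism $\phi\colon I\to F$, using Lemma~\ref{lem:versalgerrees} to write $\fR(I)=\im(\sym(\phi))$ and thereby reduce to the single homomorphism $\phi$; the analytic content is unchanged.)
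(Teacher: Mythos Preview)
The paper itself does not prove this theorem: it is quoted verbatim from \cite{reesbud} and used as a black box, so there is no proof in the present paper against which to compare your attempt. I will therefore assess the proposal on its own terms.

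The identification $\fR(i)=\bigoplus_{n\ge0}I^nt^n$ is correct and routine, and the reduction of $\fR(I)=\fR(i)$ to the inclusion $L_i\subseteq L_g$ for each $g\colon I\to A^r$, followed by the reduction to associated primes of $A$, is the right framework. The identity $a\,g(b)=b\,g(a)$ for $g\in\hom_A(I,A)$ is indeed the special feature of ideals driving the argument, and your treatment of the case $I\not\subseteq\mathfrak p$ is fine: once $I_{\mathfrak p}$ contains a unit, each component $g_l$ becomes a scalar multiple of $i$ after localization, so $g_{\mathfrak p}$ factors through $i_{\mathfrak p}$ and $L_i\subseteq L_g$ holds locally.

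The genuine gap is exactly where you flag it, and it is not merely cosmetic. In the depth-zero local case with $I\subseteq\mathfrak m$, the annihilation statement $I^n\cdot\sym^n(g)(\xi)=0$ can be vacuous: for $A=k[x,y]/(x^2,xy,y^2)$ and $I=(x)$ one has $I^2=0$, so annihilation by $I^n$ carries no information in degrees $n\ge2$. Your follow-up observation that no $g_l$ takes a unit value is correct, but the stated reason is garbled: ``$I$ principal with a zerodivisor generator'' is not absurd in itself (the example just given is one); the actual contradiction is that a unit value $g_l(a)$ forces $\operatorname{Ann}(a)\cdot g_l(a)=0$, hence $\operatorname{Ann}(a)=0$, while $a\in\mathfrak m\in\operatorname{Ass}(A)$ makes $a$ a zerodivisor. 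In any case, knowing that the coefficients of $\sym^n(g)(\xi)$ lie in $\mathfrak m$ --- or even in $\mathfrak m^n$ --- does not force them to vanish in general, and the phrase ``a closer analysis of this local, depth-zero situation completes the argument'' does not supply the missing idea. The Eisenbud--Huneke--Ulrich proof handles this case with a further argument not recoverable from the two observations you have written down, so as it stands the proposal is incomplete precisely at the step you yourself identify as the obstacle.
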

Thus, Definition~\ref{def:rees} is a true generalization of the usual Rees algebra for ideals. To compute the Rees algebra of a module, we use versal maps and Lemma~\ref{lem:versalgerrees}. It is in fact easy  to construct a versal map from any finitely generated module $M$ over a noetherian ring. This follows from the following lemma and proposition.
\begin{lemma}\label{lem:faktoriserar}
Any map $M\to F$, where $F$ is free and finitely generated, factors canonically as $M\to M^{\ast\ast}\to F^{\ast\ast}\cong F$.
\end{lemma}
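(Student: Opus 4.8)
The plan is to build the factorization from the canonical biduality natural transformation. For an $A$-module $N$, write $N^\ast=\hom_A(N,A)$ and let $\eta_N\colon N\to N^{\ast\ast}$ be the evaluation map, sending $x\in N$ to the functional $\phi\mapsto\phi(x)$ on $N^\ast$. This is $A$-linear and natural in $N$: for any homomorphism $g\colon M\to F$ one has the commuting square $g^{\ast\ast}\circ\eta_M=\eta_F\circ g$, where $g^{\ast\ast}\colon M^{\ast\ast}\to F^{\ast\ast}$ denotes the double dual of $g$. I would verify this simply by unwinding the definitions of $\eta$ and of the dual of a map.

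The second ingredient is that $\eta_F$ is an isomorphism whenever $F$ is finitely generated and free. Since $\hom_A(-,A)$ turns finite direct sums into finite direct products and $\eta_A\colon A\to A^{\ast\ast}$ is the identity of $A$ under the canonical identifications, $\eta_{A^n}$ is an isomorphism for every $n$; choosing a basis of $F$, equivalently an isomorphism $F\cong A^n$, transports this to $\eta_F$. This is the only place where the hypotheses on $F$ are used, and it is the one point worth stating carefully: for a general module $\eta$ need not be an isomorphism, and in particular $\eta_M$ itself need not be injective.

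Putting the two together, for $g\colon M\to F$ the naturality square yields $g=\eta_F^{-1}\circ g^{\ast\ast}\circ\eta_M$, which is exactly a factorization of $g$ as
\[M\xrightarrow{\ \eta_M\ }M^{\ast\ast}\xrightarrow{\ g^{\ast\ast}\ }F^{\ast\ast}\xrightarrow{\ \eta_F^{-1}\ }F,\]
that is, as $M\to M^{\ast\ast}\to F^{\ast\ast}\cong F$. It is canonical in the sense that the outer maps $\eta_M$ and $\eta_F^{-1}$ depend only on $M$ and $F$ and not on $g$, while the middle map is forced to be $g^{\ast\ast}$; in particular the factorization is functorial in $M$. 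There is no serious obstacle in the argument — it is entirely a matter of naturality of $\eta$ together with the invertibility of $\eta_F$ for finitely generated free $F$.
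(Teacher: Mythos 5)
Your argument is correct and is essentially the paper's own proof: the naturality square $g^{\ast\ast}\circ\eta_M=\eta_F\circ g$ for the canonical biduality map, combined with the fact that $\eta_F$ is an isomorphism because $F$ is finitely generated and free, yields the factorization. You merely spell out in more detail why $\eta_F$ is invertible, which the paper takes as known.
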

\begin{proof}
There are canonical maps from any module to its double dual, which gives the commutativity of the square:
\[\xymatrix{
M^{\ast\ast}\ar[r]&F^{\ast\ast}\\
M\ar[r]\ar[u]&F\ar[u]
}\]
Since $F$ is free and finitely generated it is canonically isomorphic to $F^{\ast\ast}$, so the commutative square reduces to the commutative diagram
\begin{equation*}
\begin{split}
\xymatrix{
M^{\ast\ast}\ar[dr]\\
M\ar[r]\ar[u]&F}
\end{split}
\end{equation*}
which gives the desired factorization.
\end{proof}
\begin{remark}
The previous result, and several other results throughout this paper, remain true when replacing free modules with projective modules. However, we restrict ourselves to free modules for clarifying the connections with the results of \cite{reesbud}. 
\end{remark}

\begin{proposition}[\cite{reesbud} Proposition 1.3]\label{prop:versur}
A homomorphism $\phi\colon M\to F$ is versal if and only if $\phi^\ast\colon F^\ast\to M^\ast$ is surjective.
\end{proposition}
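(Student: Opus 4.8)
The plan is to prove the biconditional in Proposition~\ref{prop:versur} by separately establishing the two implications, using Lemma~\ref{lem:faktoriserar} to reduce the factorization question for maps into free modules to a question about maps between double duals.

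\textbf{The "if" direction.} Suppose $\phi^\ast\colon F^\ast\to M^\ast$ is surjective; I want to show every $g\colon M\to E$ with $E$ free (and, since $M$ is finitely generated, we may take $E$ finitely generated) factors through $\phi$. By Lemma~\ref{lem:faktoriserar}, $g$ factors canonically as $M\to M^{\ast\ast}\xrightarrow{g^{\ast\ast}} E^{\ast\ast}\cong E$, and likewise the map $\phi$ fits into a commutative triangle $M\to M^{\ast\ast}\xrightarrow{\phi^{\ast\ast}} F^{\ast\ast}\cong F$. So it suffices to produce a map $F\cong F^{\ast\ast}\to E^{\ast\ast}\cong E$ through which $g^{\ast\ast}$ factors. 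Now apply $(-)^\ast$ to $\phi^\ast\colon F^\ast\twoheadrightarrow M^\ast$: since $E$ is free of finite rank, $E^\ast$ is projective, so the surjection $\phi^\ast$ induces a surjection $\hom_A(M^\ast,E^\ast)\to\hom_A(F^\ast,E^\ast)$ — wait, rather: projectivity of $E^\ast$ gives that any map $E^\ast\to M^\ast$ lifts along $\phi^\ast$ to a map $E^\ast\to F^\ast$. Concretely, take $g^\ast\colon E^\ast\to M^\ast$; it lifts to some $\psi\colon E^\ast\to F^\ast$ with $\phi^\ast\circ\psi=g^\ast$. Dualizing once more and using the canonical identifications $E^{\ast\ast}\cong E$, $F^{\ast\ast}\cong F$, we get $\psi^\ast\colon F\to E$ with $\psi^\ast\circ\phi^{\ast\ast}=g^{\ast\ast}$ as maps $M^{\ast\ast}\to E$, and precomposing with $M\to M^{\ast\ast}$ shows $g=\psi^\ast\circ\phi$. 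Hence $\phi$ is versal.

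\textbf{The "only if" direction.} Suppose $\phi$ is versal; I want $\phi^\ast\colon F^\ast\to M^\ast$ surjective, i.e.\ every $\lambda\in M^\ast=\hom_A(M,A)$ is of the form $\mu\circ\phi$ for some $\mu\in F^\ast$. But $\lambda$ is exactly a homomorphism from $M$ to the free module $A$ of rank one, so by versality it factors as $\lambda=\mu\circ\phi$ with $\mu\colon F\to A$, i.e.\ $\mu\in F^\ast$ and $\phi^\ast(\mu)=\lambda$. This direction is essentially immediate.

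I do not expect a serious obstacle here; the proposition is quoted from \cite{reesbud} and the argument is a formal duality manipulation. The only point requiring a little care is the "if" direction, where one must (a) invoke finite generation of $M$ to reduce to finitely generated free $E$ so that the canonical maps $E\to E^{\ast\ast}$ and $F\to F^{\ast\ast}$ are isomorphisms, allowing Lemma~\ref{lem:faktoriserar} to be applied and inverted, and (b) use that $E^\ast$ is projective to lift $g^\ast$ along the surjection $\phi^\ast$. Everything else is bookkeeping with the naturality of the double-dual map, which guarantees the relevant squares commute so that the constructed factorization of $g^{\ast\ast}$ restricts to a factorization of $g$ itself.
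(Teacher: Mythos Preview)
Your proof is correct and follows essentially the same approach as the paper's: both directions use the same ideas (versality applied to $E=A$ for surjectivity of $\phi^\ast$; projectivity of $E^\ast$ to lift $g^\ast$ along $\phi^\ast$, then Lemma~\ref{lem:faktoriserar} to descend from $g^{\ast\ast}=\psi^\ast\circ\phi^{\ast\ast}$ to $g=\psi^\ast\circ\phi$). Aside from the order of the two implications and your momentary ``wait, rather'' aside, your argument matches the paper's proof step for step.
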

\begin{proof}
Assume first that $M\to F$ is versal and consider any $g\in M^\ast$, that is, a homomorphism $g\colon M\to A$. Since $A$ is free and $M\to F$ is versal, it follows that there is some $h\colon F\to A$, that is, $h\in F^\ast$, lifting $g$. Hence $F^\ast\to M^\ast$ is surjective.

Conversely, assume that $\phi^\ast\colon F^\ast\to M^\ast$ is surjective. Let $E$ be a free $A$-module and let $g\colon M\to E$ be any homomorphism. We want to construct a map $h\colon F\to E$ such that $g=h\circ \phi$. Since $M$ is finitely generated we can assume that $E$ has finite rank, so $E^\ast$ is a free module of finite rank. In particular, $E^\ast$ is projective, which gives a map $h^\ast\colon E^\ast\to F^\ast$ such that the diagram
\[\xymatrix{
E^\ast\ar[r]^{g^\ast}\ar[dr]_{h^\ast}&M^\ast\\
&F^\ast\ar@{->>}[u]_{\phi^\ast}}\]
commutes.
Since $F$ and $E$ are free of finite rank it follows that $(F^\ast)^\ast=F$ and $(E^\ast)^\ast=E$, so dualizing again gives a map $h\colon F\to E$ such that $h\circ\phi^{\ast\ast}=g^{\ast\ast}$. We  have to show that $g=h\circ \phi$, and this follows from Lemma~\ref{lem:faktoriserar}. Indeed, both $g$ and $h$ factors via the canonical map $M\to M^{\ast\ast}$, which gives commutativity in the small triangles of the diagram
\[\xymatrix{&&M\ar[dl]\ar@/_1pc/[dll]_g\ar@/^1pc/[ddl]^\phi\\
E&M^{\ast\ast}\ar[l]_{g^{\ast\ast}}\ar[d]^{\phi^{\ast\ast}}\\
&F\ar[ul]^h}\]
and this implies that the whole diagram commutes.
\end{proof}

Since dualization is a contravariant left-exact functor, a versal map $M\to F$ has an injective double dual $M^{\ast\ast}\to F^{\ast\ast}$.
Therefore, given any finitely generated module $M$, there is a versal map $M\to F$ for some free and finitely generated module $F$. Indeed, since $A$ is noetherian and $M$ is finitely generated, it follows that $M^\ast$ is finitely generated, so we can find a surjection $F'\to M^\ast$ from some finitely generated and free module $F'$. Letting $F=(F')^\ast$ we get, by Proposition~\ref{prop:versur}, that the composition $M\to M^{\ast\ast}\hookrightarrow F$ is versal. 
\begin{remark}
Here we really need our standing assumption that $A$ is noetherian, since the dual of a finitely generated module over a non-noetherian ring need not be finitely generated and a non-finitely generated free module is not necessarily isomorphic to its double dual. This necessary assumption is not emphasized in some of the results of \cite{reesbud}.
\end{remark}

Furthermore, an immediate consequence of Lemma~\ref{lem:faktoriserar} and Proposition~\ref{prop:versur} is the following.

\begin{lemma}\label{lem:versalfakt}
If a homomorphism $\phi\colon M\to F$ is versal then it has a canonical factorization $M\to M^{\ast\ast}\hookrightarrow F$, where $M^{\ast\ast}\hookrightarrow F$ is injective.
\end{lemma}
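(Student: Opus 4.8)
The plan is to read the factorization off from Lemma~\ref{lem:faktoriserar} and then use Proposition~\ref{prop:versur} to see that the relevant arrow is injective. By Lemma~\ref{lem:faktoriserar}, since $F$ is free and finitely generated, $\phi$ factors canonically as $M\to M^{\ast\ast}\to F^{\ast\ast}\cong F$, where the middle arrow is the double dual $\phi^{\ast\ast}$ and the last map is the canonical isomorphism (valid precisely because $F$ is free of finite rank). So the factorization asserted in the statement is this one, and it remains only to check that $\phi^{\ast\ast}\colon M^{\ast\ast}\to F^{\ast\ast}$ is injective.

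For that, I would invoke Proposition~\ref{prop:versur}: $\phi$ being versal is equivalent to $\phi^\ast\colon F^\ast\to M^\ast$ being surjective. Applying the contravariant functor $\hom_A(-,A)$ to this surjection, and using that it is left exact (so that it sends epimorphisms to monomorphisms), yields that $\phi^{\ast\ast}=(\phi^\ast)^\ast\colon M^{\ast\ast}\to F^{\ast\ast}$ is injective. Composing with the canonical isomorphism $F^{\ast\ast}\cong F$ then gives the desired injection $M^{\ast\ast}\hookrightarrow F$, completing the factorization $M\to M^{\ast\ast}\hookrightarrow F$ of $\phi$.

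There is no serious obstacle here: the statement is indeed an immediate consequence of the two cited results, as already foreshadowed by the remark preceding it. The only point deserving a line of care is the identification of the middle arrow of the factorization in Lemma~\ref{lem:faktoriserar} with $\phi^{\ast\ast}$ itself (rather than with some unspecified canonical map), which follows from the naturality of the unit $\id\to(-)^{\ast\ast}$ of the double-dual functor, i.e.\ from the commutativity of the square appearing in the proof of Lemma~\ref{lem:faktoriserar}.
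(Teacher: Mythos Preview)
Your proposal is correct and follows exactly the route the paper intends: the lemma is stated as an immediate consequence of Lemma~\ref{lem:faktoriserar} and Proposition~\ref{prop:versur}, and the paper even notes just before the lemma that dualization is contravariant left-exact, so a versal map has injective double dual. Your write-up simply spells out these two steps, including the identification of the middle arrow with $\phi^{\ast\ast}$, which is precisely what the paper has in mind.
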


\begin{remark}\label{rem:fnv}
Note that a homomorphism $\phi\colon M\to F$ that factors as $M\to M^{\ast\ast}\hookrightarrow F$ is not necessarily versal. Indeed, we have already noted in Remark~\ref{ejversal} that the inclusion $i\colon (x)\hookrightarrow \C[x]$ is not versal, and since $(x)$ is a free $\C[x]$-module it is canonically isomorphic to its double dual. Thus, it follows that we have such a factorization, 
\[\xymatrix{(x)^{\ast\ast}\ar@{^(->}[rd]\\(x)\ar[u]^{\cong}\ar@{^(->}[r]&\C[x]}\]
even though $i$ is not versal. In \cite{jag2}, we find a better characterization of versal maps in terms of coherent functors.
\end{remark}

The result of Lemma~\ref{lem:versalgerrees} tells us that we can calculate the Rees algebra of a module $M$ by using a versal map $M\to F$, for some finitely generated and free module $F$, and Lemma~\ref{lem:versalfakt} says that such a map factors as $M\to M^{\ast\ast}\hookrightarrow F$. Since the symmetric algebra does not preserve injections, this induces a commutative diagram
\[\xymatrix{\sym(M^{\ast\ast})\ar[dr]\\
\sym(M)\ar[u]\ar[r]&\sym(F)}\]
where the map $\sym(M^{\ast\ast})\to\sym(F)$ is not necessarily injective. Thus, $\fR(M)$ is not equal to the image of the map $\sym(M)\to\sym(M^{\ast\ast})$, so the factorization given by Lemma~\ref{lem:versalfakt} is not immediately applicable for computing the Rees algebra of $M$. However, in Section~\ref{sec:rees}, we  find another factorization of $\sym(M)\to\sym(F)$, using divided powers, such that the second map is injective. This factorization will also be induced by the result of Lemma~\ref{lem:versalfakt}, but in a less straightforward way. Finding this factorization will require results that we give in the following sections.

Let us, for now, instead discuss another aspect of the Rees algebra of a module, which shows an important difference between it and the symmetric algebra. It is well known that the symmetric algebra functor has the property that it is left adjoint to the forgetful functor from the category of $A$-algebras to the category of $A$-modules. Therefore, the symmetric algebra preserves colimits. % is right-exact. Note that the category of $A$-algebras is non-abelian, so the condition of a functor being right-exact is defined as the functor preserving finite colimits. 
The Rees algebra, however, does not preserve all colimits and is therefore not a left adjoint. This is a result that the following lemma and example will show.

\begin{lemma}\label{lem:sumver}
If $\phi\colon M\to F$ is versal then $\phi\oplus\phi\colon M\oplus M\to F\oplus F$ is versal. 
\end{lemma}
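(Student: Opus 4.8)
The plan is to reduce the statement to the dual characterization of versality established in Proposition~\ref{prop:versur}, namely that a homomorphism $\psi\colon N\to G$ with $G$ free and finitely generated is versal if and only if $\psi^\ast\colon G^\ast\to N^\ast$ is surjective. First I would note that $F\oplus F$ is again free and finitely generated, so this criterion applies verbatim to the map $\phi\oplus\phi\colon M\oplus M\to F\oplus F$.

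The key step is that the contravariant functor $\hom_A(-,A)$ is additive: there are canonical isomorphisms $(M\oplus M)^\ast\cong M^\ast\oplus M^\ast$ and $(F\oplus F)^\ast\cong F^\ast\oplus F^\ast$, natural in the modules involved, and under these identifications the dual map $(\phi\oplus\phi)^\ast$ becomes $\phi^\ast\oplus\phi^\ast\colon F^\ast\oplus F^\ast\to M^\ast\oplus M^\ast$. Checking that the relevant square commutes is just a matter of unwinding the definitions of the projections and inclusions of the direct sums, and is the only bookkeeping the argument requires. Since $\phi$ is versal, Proposition~\ref{prop:versur} gives that $\phi^\ast$ is surjective; a finite direct sum of surjective maps is surjective, so $\phi^\ast\oplus\phi^\ast$, and hence $(\phi\oplus\phi)^\ast$, is surjective. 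Applying Proposition~\ref{prop:versur} in the other direction then yields that $\phi\oplus\phi$ is versal.

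I do not expect a genuine obstacle here: the result is a formal consequence of the additivity of dualization together with the dual description of versal maps, and the only point needing a little care is making the identification $(\phi\oplus\phi)^\ast=\phi^\ast\oplus\phi^\ast$ precise. Alternatively, one can argue directly from the definition without invoking Proposition~\ref{prop:versur}: given $g\colon M\oplus M\to E$ with $E$ free, set $g_i=g\circ\iota_i\colon M\to E$ for the two inclusions $\iota_i\colon M\to M\oplus M$, use versality of $\phi$ to factor $g_i=h_i\circ\phi$ with $h_i\colon F\to E$, and then the map $h\colon F\oplus F\to E$, $h(x,y)=h_1(x)+h_2(y)$, satisfies $h\circ(\phi\oplus\phi)=g$, which is exactly the required factorization.
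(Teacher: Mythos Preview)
Your primary argument is exactly the paper's proof: apply Proposition~\ref{prop:versur} to get that $\phi^\ast$ is surjective, use that dualization commutes with direct sums so that $(\phi\oplus\phi)^\ast\cong\phi^\ast\oplus\phi^\ast$ is surjective, and conclude by Proposition~\ref{prop:versur} again. Your alternative direct factorization argument is also correct, though the paper does not give it.
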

\begin{proof}
If $M\to F$ is versal, then $F^\ast\to M^\ast$ is surjective by Proposition~\ref{prop:versur}. Since dualization commutes with direct sums we have that $F^\ast\oplus F^\ast\to M^\ast\oplus M^\ast$ is surjective. Thus, $M\oplus M\to F\oplus F$ is versal by Proposition~\ref{prop:versur}.
\end{proof}

\begin{example}
We do \emph{not} in general have that $\fR(M\oplus N)=\fR(M)\otimes_A\fR(N)$, even though it holds for the symmetric algebra functor. Consider $A=\C[x]/x^2$ and $M=A/x$. Then the map $\phi\colon M\to A$ defined by $1\mapsto x$ is versal. Thus, by Lemma~\ref{lem:versalgerrees}, \[\fR(M)=\fR(\phi)=\im\bigl(\sym(\phi)\bigr).\]
We have that $\sym(M)=A[S]/xS$, where $S$ is an independent variable, that $\sym(A)=A[S]$, and that the map \[\sym(\phi)\colon A[S]/xS\to A[S]\] is defined by  $S\mapsto xS$. Thus, the image of $\sym(\phi)$ is $A[xS]=A[U]/(xU,U^2)$, so, by Lemma~\ref{lem:versalgerrees}, $\fR(M)=A[U]/(xU,U^2)$. By Lemma~\ref{lem:sumver} we have that $\phi\oplus\phi\colon M\oplus M\to A\oplus A$ is versal. Since $\sym(M\oplus M)=\sym(M)\otimes_A\sym(M)=A[S,T]/(xS,xT)$ we have that 
\[\sym(\phi\oplus\phi)\colon A[S,T]/(xS,xT)\to A[S,T]\]
is defined by $S\mapsto xS$ and $T\mapsto xT$. Thus, by Lemma~\ref{lem:versalgerrees},
\[\fR(M\oplus M)=A[xS,xT]=A[U,V]/(xU,U^2,xV,V^2,UV).\]
This is \emph{not} equal to 
\[\fR(M)\otimes_A\fR(M)=A[U]/(xU,U^2)\otimes_AA[V]/(xV,V^2)=A[U,V]/(xU,U^2,xV,V^2).\exendhere\]
\end{example}
Thus, we have seen that the Rees algebra does not preserve coproducts and is therefore not a left adjoint of any functor. %We end this section by showing a property that the Rees algebra functor does have. 

\section{Colimit preserving functors and bialgebras}\label{sec:bialg}\label{sec:5}

We will here state some general results regarding functors from the category of $A$-modules to the category of $A$-algebras that we will need in the following sections. Also, we will define the notion of the graded dual of a graded algebra and show some results regarding these. 
\begin{proposition}\label{prop:bialg}
Let $\fF$ be a functor from the category of $A$-modules to the category of (graded) $A$-algebras. Suppose that $\fF$ preserves colimits and that the multiplication on $\fF(M)$, for any $A$-module $M$, is induced by the addition on $M$. Then, $\fF(M)$ has a natural structure of a commutative (graded) bialgebra.
\end{proposition}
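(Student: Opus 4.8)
The plan is to exhibit the comultiplication and counit on $\fF(M)$ and to verify the bialgebra axioms by functoriality, reducing every identity to a corresponding identity among maps of $A$-modules that involve only the addition $+\colon M\oplus M\to M$, the zero map $0\colon 0\to M$, and the diagonal. First I would note that since $\fF$ preserves colimits and the coproduct in the category of $A$-algebras is the tensor product over $A$, we have a natural isomorphism $\fF(M\oplus N)\cong\fF(M)\otimes_A\fF(N)$; in particular $\fF(M\oplus M)\cong\fF(M)\otimes_A\fF(M)$ and $\fF(0)\cong A$, the initial object. The comultiplication $\Delta\colon\fF(M)\to\fF(M)\otimes_A\fF(M)$ is then defined as $\fF$ applied to the diagonal $M\to M\oplus M$, followed by this isomorphism, and the counit $\epsilon\colon\fF(M)\to A$ is $\fF$ applied to $M\to 0$. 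These are algebra maps because $\fF$ lands in $A$-algebras, and they are graded in the graded case because the diagonal and zero map are degree-preserving; naturality in $M$ is immediate from naturality of the isomorphism $\fF(M\oplus N)\cong\fF(M)\otimes_A\fF(N)$.

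Next I would check coassociativity and counitality. Coassociativity follows by applying $\fF$ to the commuting square of $A$-modules expressing that $M\to M\oplus M\oplus M$ factors through $M\oplus M$ in both ways $((\id,\Delta_M)$ versus $(\Delta_M,\id))$, together with the fact that the isomorphism $\fF(\ -\oplus-\ )\cong\fF(-)\otimes_A\fF(-)$ is itself associative (a coherence one gets from uniqueness of colimits, i.e. from the universal property of the ternary coproduct). Counitality similarly comes from applying $\fF$ to the identities $M\xrightarrow{\Delta}M\oplus M\xrightarrow{(\id,0)}M$ and using that under $\fF(M\oplus 0)\cong\fF(M)\otimes_A\fF(0)\cong\fF(M)\otimes_A A\cong\fF(M)$ the counit corresponds to $\fF(M\to 0)$. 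Cocommutativity, which gives the "commutative bialgebra" in the sense of cocommutative, follows from the symmetry of the diagonal under the swap $M\oplus M\to M\oplus M$ together with the symmetry of the isomorphism $\fF(M)\otimes_A\fF(M)\cong\fF(M)\otimes_A\fF(M)$.

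The one remaining axiom, and the place where the hypothesis that the multiplication is induced by addition is actually used, is the compatibility between multiplication and comultiplication: that $\Delta$ is an algebra homomorphism, equivalently that $m\colon\fF(M)\otimes_A\fF(M)\to\fF(M)$ is a coalgebra homomorphism. Here I would argue that the multiplication $m$ on $\fF(M)$ is, by hypothesis, identified with $\fF$ applied to the addition map $a\colon M\oplus M\to M$ under $\fF(M)\otimes_A\fF(M)\cong\fF(M\oplus M)$. Then the required pentagon
\[
\fF(M)^{\otimes 2}\xrightarrow{m}\fF(M)\xrightarrow{\Delta}\fF(M)^{\otimes 2}
\qquad\text{vs.}\qquad
\fF(M)^{\otimes 2}\xrightarrow{\Delta\otimes\Delta}\fF(M)^{\otimes 4}\xrightarrow{\ \mathrm{shuffle}\ }\fF(M)^{\otimes 4}\xrightarrow{m\otimes m}\fF(M)^{\otimes 2}
\]
becomes, after translating each tensor factor back through the colimit isomorphism, the image under $\fF$ of an identity of $A$-module maps: namely that $(M\oplus M)\xrightarrow{a}M\xrightarrow{\Delta}M\oplus M$ equals $(M\oplus M)\xrightarrow{\Delta\oplus\Delta}M^{\oplus 4}\xrightarrow{\mathrm{shuffle}}M^{\oplus 4}\xrightarrow{a\oplus a}M\oplus M$, which holds because both send $(x,y)$ to $(x+y,x+y)$. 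The analogous (easier) identities $\epsilon\circ m=\epsilon\otimes\epsilon$ and the unit compatibilities reduce in the same way to trivial $A$-module identities ($0+0=0$, etc.). The main obstacle, then, is purely bookkeeping: one must pin down the coherence isomorphisms $\fF(M\oplus N)\cong\fF(M)\otimes_A\fF(N)$ well enough that all the diagrams genuinely commute on the nose, rather than up to unnamed isomorphism; this is where care is needed, but it is forced by the uniqueness clause in the universal property of colimits and presents no real difficulty.
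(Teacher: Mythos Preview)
Your proposal is correct and follows essentially the same approach as the paper: define the comultiplication and counit by applying $\fF$ to the diagonal and the zero map, use colimit preservation to identify $\fF(M\oplus M)\cong\fF(M)\otimes_A\fF(M)$ and $\fF(0)\cong A$, and reduce every bialgebra axiom to an obvious identity among $A$-module maps built from $+$, $\Delta$, and $0$. You are in fact more careful than the paper in one respect: you make the shuffle in the compatibility square explicit and flag the coherence of the colimit isomorphisms, whereas the paper's diagram with vertical arrow labeled ``$+\oplus+$'' suppresses that permutation.
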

\begin{proof}
Since the initial object is the empty coproduct it follows that $\fF(0)=A$. Let $M$ be an $A$-module. Applying $\fF$ to the diagonal $\Delta\colon M\to M\oplus M$ gives a comultiplication
\[\fF(M)\to\fF(M\oplus M)\cong\fF(M)\otimes_A\fF(M),\]
where the isomorphism comes from the fact that $\fF$ preserves colimits and, in particular, coproducts. The coassociativity and cocommutativity of this comultiplication follows from the symmetry of the diagonal morphism. As the multiplication on $\fF(M)$ is induced by the addition on $M$, it follows, by the commutativity of the diagram
\[\xymatrix{M\oplus M\ar[r]^-{\Delta\oplus\Delta}\ar[d]^+&(M\oplus M)\oplus (M\oplus M)\ar[d]^{+\oplus+}\\
M\ar[r]^-{\Delta}&M\oplus M}\]
that the comultiplication $\fF(\Delta)$ preserves multiplication. Since the comultiplication on $\fF(0)$ is the identity, $A=\fF(0)\to \fF(0)\otimes_A\fF(0)=A\otimes_AA=A$, we have that the comultiplication on $\fF(M)$ also preserves the unit and multiplication by elements of $A$. Thus, it follows that $\fF(\Delta)$ is an $A$-algebra homomorphism. Furthermore, applying $\fF$ to the zero map $M\to0$ gives a map $\epsilon\colon\fF(M)\to\fF(0)=A$. Let now $i_2\colon M\to 0\oplus M$ denote the inclusion into the second term. By the commutativity of 
\[\xymatrix{M\ar[d]_\Delta\ar[rd]^{i_2}\\M\oplus M\ar[r]_{(0,\id)}&0\oplus M}\]
we get a commutative diagram
\[\xymatrix{\fF(M)\ar[d]\ar[rd]^{\id}\\\fF(M)\otimes_A\fF(M)\ar[r]_-{\epsilon\otimes\id}&A\otimes_A\fF(M)}\]
showing that $\epsilon\colon\fF(M)\to A$ is the co-unit.
\end{proof}

\begin{remark}
A functor $\fF\colon\moda\to\funa$, with the same properties as in the statement of the proposition, even gives a natural Hopf algebra structure on $\fF(M)$, for any module~$M$. Indeed, by Proposition~\ref{prop:bialg}, $\fF(M)$ has a bialgebra structure. Letting $i\colon M\to M$ denote the module homomorphism defined by $x\mapsto-x$, we get a commutative diagram
\[\xymatrix{
M\ar[r]^-\Delta\ar[dr]&M\oplus M\ar[r]^{(\id,i)}&M\oplus M\ar[d]^+\\
&0\ar[r]&M
}\]
which, after applying $\fF$, shows that $\fF(i)$ is an antipodal map giving $\fF(M)$ a Hopf algebra structure.
\end{remark}

\begin{definition}
Let $B=\bigoplus_{n\ge0} B_n$ be a graded $A$-algebra. The \emph{graded dual} $B\grd$ of $B$ is then defined as
\[B^\vee=\bigoplus_{n\ge0}B_n^\ast,\]
where $B_n^\ast=\hom_A(B_n,A)$.
\end{definition}
\begin{remark}
Note the following.
\begin{enumerate}[(i)]
\item The notation for the graded dual that we use here coincides with the notation of the Matlis duality of graded modules over a local Noetherian graded ring, see, e.g., \cite[pp.141-143]{MR1251956}. However, these are very different notions and are only notationally similar. %We hope this will not cause any confusion. % as the Matlis dual is taken over the degree zero part of the ring.
%\item We have also used the same notation for the graded dual of a graded algebra as we had for the dual of a coherent functor, to better emphasize the analogies we will see in Section~\ref{sec:rees}. Hopefully, this will not cause any confusion as it should be clear from the context which dual we are referring to. 
\item The graded dual $B^\vee$ need not be generated in degree $1$, even if $B$ is. See, e.g., Remark~\ref{rem:gammadualnotfinite}, where $\Gamma(M)$ is finitely generated in degree $1$ while its dual is not.
\item The graded dual is generally not an $A$-algebra, only an $A$-module. 
\end{enumerate}
\end{remark}

\begin{proposition}\label{prop:grdalg}
If $B=\bigoplus_{n\ge0}B_n$ is a graded $A$-bialgebra, then the $A$-module $B\grd$ has a natural structure of a graded $A$-algebra. 
\end{proposition}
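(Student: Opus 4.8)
The plan is to dualize the bialgebra structure degree by degree. Recall that the bialgebra $B=\bigoplus_{n\ge0}B_n$ carries a multiplication $\mu\colon B\otimes_A B\to B$, a unit $\eta\colon A\to B$, a comultiplication $\delta\colon B\to B\otimes_A B$, and a counit $\epsilon\colon B\to A$, all graded (here $B\otimes_A B$ is graded by $(B\otimes_A B)_n=\bigoplus_{i+j=n}B_i\otimes_A B_j$), with the compatibility that $\delta$ and $\epsilon$ are algebra maps. The key observation is that even though the full linear dual of $B$ behaves badly, the \emph{graded} dual is well-behaved because in each degree only finitely many tensor summands contribute. First I would note that for each $n$ there is a canonical map $B_i^\ast\otimes_A B_j^\ast\to (B_i\otimes_A B_j)^\ast$, hence, summing over $i+j=n$, a canonical map $(B^\vee\otimes_A B^\vee)_n\to\bigl((B\otimes_A B)_n\bigr)^\ast$. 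Dualizing the graded comultiplication $\delta_n\colon B_n\to\bigoplus_{i+j=n}B_i\otimes_A B_j$ and precomposing with this canonical map gives, in each degree, a multiplication
\[
(B^\vee\otimes_A B^\vee)_n\longrightarrow\bigl((B\otimes_A B)_n\bigr)^\ast\xrightarrow{\ \delta_n^\ast\ } B_n^\ast=(B^\vee)_n,
\]
i.e.\ a graded multiplication $B^\vee\otimes_A B^\vee\to B^\vee$. Dually, the counit $\epsilon\colon B\to A$ is concentrated in degree $0$ (since $\epsilon$ is an algebra map, $\epsilon|_{B_0}$ is the structure map and $\epsilon|_{B_n}=0$ for $n>0$), so $\epsilon^\ast\colon A\to B_0^\ast$ extends by zero to a unit $A\to B^\vee$.

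Next I would check the algebra axioms for $B^\vee$ by dualizing the corresponding coalgebra axioms for $B$. Associativity of the multiplication on $B^\vee$ follows by dualizing the coassociativity diagram for $\delta$; one has to be a little careful that the canonical maps $B_i^\ast\otimes B_j^\ast\to(B_i\otimes B_j)^\ast$ are compatible with the associativity constraints, which is a routine naturality check since everything happens in fixed total degree $n$ with only finitely many summands. Commutativity of the multiplication on $B^\vee$ follows from the cocommutativity of $\delta$ together with the symmetry of the canonical map, and the unit axiom follows from the counit axiom for $\epsilon$. The point worth stressing (and the reason the statement is limited to the graded dual rather than the full dual) is that in degree $n$ everything reduces to a finite direct sum $\bigoplus_{i+j=n}$, so the canonical map into the dual of the tensor product and all the compatibilities are with finite biproducts, for which $\hom_A(-,A)$ behaves well even without finiteness hypotheses on the $B_n$.

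The main obstacle — or rather the main point requiring care — is precisely the interplay between $\hom_A(-,A)$ and $\otimes_A$: the canonical map $B_i^\ast\otimes_A B_j^\ast\to(B_i\otimes_A B_j)^\ast$ need not be an isomorphism, so one cannot simply say "$B^\vee$ is the dual coalgebra". Instead one must verify that the multiplication defined above is well-defined and satisfies the axioms using only the existence and naturality of this canonical map, never its invertibility. Once that is granted, each axiom is obtained by applying $\hom_A(-,A)$ to the relevant commuting diagram of graded maps, restricting to a fixed total degree, and inserting the canonical maps in the appropriate places; no further input is needed. I would therefore organize the proof as: (1) construct the canonical maps and record their naturality/symmetry/associativity compatibilities; (2) define $\mu_{B^\vee}$ and $\eta_{B^\vee}$ degreewise as above; (3) deduce associativity, commutativity and the unit axiom by dualizing the coassociativity, cocommutativity and counit diagrams of $B$; and (4) observe that the grading is respected by construction, so $B^\vee$ is a graded $A$-algebra, and that the whole construction is natural in $B$.
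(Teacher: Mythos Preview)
Your proposal is correct and follows essentially the same approach as the paper: construct the canonical graded map $B^\vee\otimes_A B^\vee\to(B\otimes_A B)^\vee$, compose with the dual of the comultiplication to obtain the multiplication on $B^\vee$, dualize the counit to get the unit, and deduce associativity, commutativity, and the unit axiom from the corresponding coalgebra axioms. Your write-up is more explicit about the degreewise bookkeeping and the fact that only naturality (not invertibility) of the canonical map is needed, but the underlying argument is the same.
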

\begin{proof}
If $B$ is a bialgebra, then we have homomorphisms $B\to B\otimes_AB$ and $B\to A$. Taking the graded dual of these two morphisms gives
\[(B\otimes_AB)\grd\to B\grd\]
and $A=A\grd\to B\grd$. Furthermore, there is a map $B\grd\times B\grd\to(B\otimes_AB)\grd$ sending an element $(f,g)\in B_m^\ast\times B_n^\ast$ to the homomorphism $\alpha_{f,g}\colon%f\otimes g\colon 
B_m\otimes_AB_n\to A$, defined by $\alpha_{f,g}(b_m\otimes b_n)=f(b_m)\otimes g(b_n)$. This map is bilinear, hence induces a graded homomorphism $B\grd\otimes_AB\grd\to(B\otimes_AB)\grd$. We define the multiplication of $B\grd$ as the composition
\[B\grd\otimes_AB\grd\to(B\otimes_AB)\grd\to B\grd.\] 
That this multiplication is associative and commutative follows from the coassociativity and the cocommutativity of the comultiplication that it is induced by. Similarly, that $A\to B\grd$ is a unit follows by the fact that $B\to A$ is co-unit.
\end{proof}

\begin{remark}
The graded dual $B\grd$ is generally not a bialgebra. There is a homomorphism $B\grd\to(B\otimes_AB)\grd$ but since \[(B\otimes_AB)\grd\leftarrow B\grd\otimes_AB\grd\] goes in the wrong direction this map does not give any comultiplication structure on the $A$-algebra $B\grd$.
\end{remark}

\begin{remark}
In the sequel we write $f\otimes g$ for both the element in $B_m^\ast\otimes_AB_n^\ast$ and the image of $f\otimes g$ under the canonical map  $B_m^\ast\otimes_AB_n^\ast\to(B_m\otimes_AB_n)^\ast$, that is, $\alpha_{f,g}:=f\otimes g$.
%As was used in the previous proof, given modules $M$ and $N$, there is a canonical map $M^\ast\otimes_A N^\ast\to(M\otimes_AN)^\ast$, which is not an isomorphism in general. %However, despite the unfortunate notation, we will, 
%For any two homomorphisms $f\in M^\ast$ and $g\in N^\ast$, we get an element $f\otimes g\in M^\ast\otimes_A N^\ast$. Applying the canonical map on this element will give a homomorphism $\alpha_{f,g}\in (M\otimes_AN)^\ast$ defined by $\alpha_{f,g}(m\otimes n)=f(m)\otimes g(n)$ for all $m\otimes n\in M\otimes N$. Despite the unfortunate notation, we will in the following identify these and write $\alpha_{f,g}:=f\otimes g$. This will hopefully not cause any confusion.
\end{remark}

\begin{proposition}\label{prop:grdfalg}
Let $\fF$ be a functor from the category of $A$-modules to the category of graded $A$-algebras. Suppose that $\fF$ preserves colimits, and that the multiplication on $\fF(M)$, for any module $M$, is induced by the addition on $M$. Then $\fF(M)\grd$ is a graded $A$-algebra for any $M$. Furthermore, a module homomorphism $M\to N$ induces a homomorphism $\fF(N)\grd\to\fF(M)\grd$ of graded algebras.
\end{proposition}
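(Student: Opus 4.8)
The plan is to derive the first assertion directly from the two preceding propositions and then establish the functoriality by a short diagram chase. For the first statement, I would observe that the hypotheses on $\fF$ are exactly those of Proposition~\ref{prop:bialg}, so for every module $M$ the graded $A$-algebra $\fF(M)$ carries a natural structure of commutative graded $A$-bialgebra, with comultiplication $\fF(\Delta)$ and co-unit $\epsilon$. Proposition~\ref{prop:grdalg} then applies verbatim and shows that the $A$-module $\fF(M)\grd$ is a graded $A$-algebra, whose multiplication is the composition
\[\fF(M)\grd\otimes_A\fF(M)\grd\to\bigl(\fF(M)\otimes_A\fF(M)\bigr)\grd\to\fF(M)\grd,\]
in which the second arrow is the graded dual of $\fF(\Delta)$, and whose unit is the graded dual $A\to\fF(M)\grd$ of $\epsilon$.

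For the second statement, let $\phi\colon M\to N$ be a homomorphism of $A$-modules. Since $\fF$ takes values in graded $A$-algebras, $\fF(\phi)\colon\fF(M)\to\fF(N)$ is a degree-preserving homomorphism of graded $A$-algebras, so its graded dual $\fF(\phi)\grd\colon\fF(N)\grd\to\fF(M)\grd$ is defined degreewise by dualizing each $\fF(\phi)_n$. The key point is that $\fF(\phi)$ is in fact a homomorphism of graded bialgebras. Compatibility with the comultiplications is obtained by applying $\fF$ to the naturality square of the diagonal,
\[\xymatrix{M\ar[r]^\phi\ar[d]_\Delta&N\ar[d]^\Delta\\M\oplus M\ar[r]_{\phi\oplus\phi}&N\oplus N}\]
and using that $\fF$ preserves coproducts, which identifies $\fF(\phi\oplus\phi)$ with $\fF(\phi)\otimes_A\fF(\phi)$ under the canonical isomorphisms $\fF(M\oplus M)\cong\fF(M)\otimes_A\fF(M)$ and $\fF(N\oplus N)\cong\fF(N)\otimes_A\fF(N)$. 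Compatibility with the co-units follows similarly from $\fF$ applied to the equality of the maps $M\to N\to0$ and $M\to0$.

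Finally I would record the general fact that the graded dual of a homomorphism $\psi\colon B\to C$ of graded $A$-bialgebras is a homomorphism $\psi\grd\colon C\grd\to B\grd$ of graded $A$-algebras, and apply it with $\psi=\fF(\phi)$. Multiplicativity of $\psi\grd$ is the commutativity of
\[\xymatrix{C\grd\otimes_AC\grd\ar[r]\ar[d]_{\psi\grd\otimes\psi\grd}&(C\otimes_AC)\grd\ar[r]\ar[d]&C\grd\ar[d]^{\psi\grd}\\B\grd\otimes_AB\grd\ar[r]&(B\otimes_AB)\grd\ar[r]&B\grd}\]
in which the left square commutes by naturality of the canonical pairing $(-)\grd\otimes_A(-)\grd\to\bigl((-)\otimes_A(-)\bigr)\grd$, and the right square is the graded dual of the square expressing that $\psi$ commutes with comultiplication; that $\psi\grd$ preserves units is the graded dual of the identity $\epsilon_B=\epsilon_C\circ\psi$. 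I expect the only point requiring genuine care to be the second paragraph: one must check that the isomorphism $\fF(M\oplus M)\cong\fF(M)\otimes_A\fF(M)$ furnished by preservation of coproducts is natural in $M$, so that it really does intertwine $\fF(\phi\oplus\phi)$ with $\fF(\phi)\otimes_A\fF(\phi)$. Everything else is a formal consequence of Propositions~\ref{prop:bialg} and~\ref{prop:grdalg} together with the contravariant functoriality of the graded dual.
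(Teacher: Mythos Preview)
Your proposal is correct and follows essentially the same route as the paper: the first assertion is deduced from Propositions~\ref{prop:bialg} and~\ref{prop:grdalg}, and the second from applying $\fF$ to the naturality square of the diagonal, then dualizing to obtain the three-row diagram showing that $\fF(N)\grd\to\fF(M)\grd$ preserves multiplication and unit. Your treatment is slightly more explicit in isolating the general fact that the graded dual of a bialgebra map is an algebra map and in flagging the naturality of $\fF(M\oplus M)\cong\fF(M)\otimes_A\fF(M)$, but the argument is the same.
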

\begin{proof}
By Proposition~\ref{prop:bialg} we have that $\fF(M)$ is a bialgebra. By Proposition~\ref{prop:grdalg} it thus follows that $\fF(M)\grd$ is a graded $A$-algebra. Let now $M\to N$ be an $A$-module homomorphism. There is a commutative diagram
\[\xymatrix{
M\ar[r]\ar[d]& N\ar[d]\\
M\oplus M\ar[r]&N\oplus N}\]
which, after applying $\fF$, gives a commutative diagram:
\[\xymatrix{
\fF(M)\ar[r]\ar[d]& \fF(N)\ar[d]\\
\fF(M)\otimes_A \fF(M)\ar[r]&\fF(N)\otimes_A\fF(N)}\]
Dualizing gives us the commutative diagram
\[\xymatrix{
\fF(M)\grd& \fF(N)\grd\ar[l]\\
(\fF(M)\otimes_A \fF(M))\grd\ar[u]&(\fF(N)\otimes_A\fF(N))\grd\ar[u]\ar[l]\\
\fF(M)\grd\otimes_A \fF(M)\grd\ar[u]&\fF(N)\grd\otimes_A\fF(N)\grd\ar[u]\ar[l]
}\]
which shows that the $A$-module homomorphism $\fF(N)\grd\to\fF(M)\grd$ preserves the multiplication. As $\fF(N)\grd\to\fF(M)\grd$ also maps the unit in $\fF(N)\grd$ to the unit in $\fF(M)\grd$, it follows that this is a homomorphism of graded $A$-algebras.
\end{proof}

Functors that preserve finite colimits are called right-exact. Every right-exact functor between abelian categories preserves surjections, and we conclude this section by showing that the same holds for right-exact functors $\moda \to \mathbf{Alg}_A$, even though $\mathbf{Alg}_A$ is not abelian.
\begin{lemma}\label{lem:coeqalg}
Let $B$ and $C$ be two $A$-algebras. The coequalizer of two homomorphisms \[f,g\colon B{%\underset{g}{\overset{f}
{\rightrightarrows}}C\] is the quotient $q\colon C\to C/I$, where $I$ is the ideal generated by $f(b)-g(b)$ for all $b\in B$.
\end{lemma}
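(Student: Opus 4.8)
The plan is to verify the two defining properties of a coequalizer for $q\colon C\to C/I$ directly: first that it coequalizes $f$ and $g$, and second that it is universal among such maps. For the first part, note that $q\circ f$ and $q\circ g$ agree precisely because $I$ contains every element $f(b)-g(b)$, so $q(f(b))=q(g(b))$ for all $b\in B$; thus $q\circ f=q\circ g$. The nontrivial point is that $I$, as defined, really is an ideal of $C$ — it is the ideal \emph{generated by} the set $\{f(b)-g(b):b\in B\}$, not merely the $A$-submodule spanned by it — so $C/I$ is a genuine $A$-algebra and $q$ a genuine algebra homomorphism.

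For the universal property, I would take any $A$-algebra $D$ together with a homomorphism $h\colon C\to D$ satisfying $h\circ f=h\circ g$, and produce a unique $\bar h\colon C/I\to D$ with $\bar h\circ q=h$. Since $h(f(b)-g(b))=0$ for every $b\in B$, the kernel of $h$ contains all the generators of $I$, hence contains $I$ itself (a kernel of a ring homomorphism is an ideal, and an ideal containing a generating set contains the ideal it generates). Therefore $h$ factors through the quotient $C/I$, giving $\bar h$; uniqueness of $\bar h$ is immediate because $q$ is surjective. This exhibits $q\colon C\to C/I$ as the coequalizer in $\mathbf{Alg}_A$.

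The main obstacle — really the only subtlety — is the bookkeeping that the ideal generated by $\{f(b)-g(b)\}$ is the correct object: one must check that quotienting by the full ideal (rather than by some smaller sub-object) does not identify more than necessary, which is exactly what the universal property argument confirms. I would close by remarking that this is the standard description of coequalizers in any category of commutative algebras, and that combined with the fact that cokernels of module maps pass to algebra quotients it shows a right-exact functor $\moda\to\mathbf{Alg}_A$ sends a surjection $M\twoheadrightarrow N$, viewed as the coequalizer of the two projections $M\times_N M\rightrightarrows M$, to a surjection of algebras — so right-exact functors to $\mathbf{Alg}_A$ preserve surjections, as claimed in the paragraph preceding the lemma.
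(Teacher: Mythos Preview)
Your proof is correct and follows essentially the same approach as the paper: verify $q\circ f=q\circ g$ directly from the definition of $I$, then use the universal property of the quotient to factor any $h$ with $h\circ f=h\circ g$ uniquely through $C/I$. Your closing remark anticipating the surjection-preservation result is extraneous to the lemma itself (and the paper presents $N$ as the coequalizer of $\ker(f)\hookrightarrow M$ and the zero map rather than of the kernel pair, though both work).
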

\begin{proof}
It is clear that $q\circ f=q\circ g$. If $h\colon C\to D$ is a homomorphism to an $A$-algebra $D$ such that $h\circ f=h\circ g$, then it follows, by the universal property of the quotient, that there exist a unique map $C/I\to D$ factoring $h$. Hence $q\colon C\to C/I$ has the universal property of the coequalizer of $f$ and $g$.
\end{proof}

\begin{proposition}\label{prop:presurj}
A right-exact functor $\fF\colon \moda\to\mathbf{Alg}_A$ preserves surjections.
\end{proposition}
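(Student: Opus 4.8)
The plan is to reduce the statement to the previous lemma on coequalizers in $\mathbf{Alg}_A$. First I would recall that a surjection $M \to N$ of $A$-modules is a coequalizer in $\moda$: namely, if $K = \ker(M \to N)$, then $N$ is the coequalizer of the two maps $K \rightrightarrows M$ given by the inclusion and the zero map (equivalently, $N$ is the cokernel of $K \hookrightarrow M$, and cokernels are a special case of coequalizers). Since $\fF$ is right-exact, it preserves this finite colimit, so $\fF(N)$ is the coequalizer in $\mathbf{Alg}_A$ of the two induced homomorphisms $\fF(K) \rightrightarrows \fF(M)$.

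Next I would apply Lemma~\ref{lem:coeqalg}: the coequalizer of two homomorphisms $\fF(K) \rightrightarrows \fF(M)$ is the quotient $\fF(M) \to \fF(M)/I$, where $I$ is the ideal generated by the differences of the images of the two maps. In particular this quotient map is surjective. Combining with the previous paragraph, the canonical homomorphism $\fF(M) \to \fF(N)$ is (up to the canonical identification of $\fF(N)$ with $\fF(M)/I$) exactly this quotient map, hence surjective. This completes the argument.

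The only point requiring a little care — and the step I expect to be the main obstacle, though it is minor — is the very first claim: that every surjection of modules arises as a coequalizer (equivalently, a cokernel) of a parallel pair, so that right-exactness is actually applicable. This is standard in $\moda$, since it is abelian and $\coker(K \hookrightarrow M) = M/K = N$ is the coequalizer of $K \rightrightarrows M$ with arrows the inclusion and $0$. Once this is in place, everything else is formal: right-exact functors preserve coequalizers by definition, and Lemma~\ref{lem:coeqalg} identifies the shape of coequalizers in $\mathbf{Alg}_A$ as quotient maps, which are surjective.
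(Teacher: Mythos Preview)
Your proposal is correct and follows essentially the same route as the paper: write the surjection $M\to N$ as the coequalizer of the inclusion $\ker(f)\hookrightarrow M$ and the zero map, use right-exactness to pass the coequalizer through $\fF$, and then invoke Lemma~\ref{lem:coeqalg} to identify $\fF(N)$ with a quotient $\fF(M)/I$, which is surjected onto by $\fF(M)$. The paper's proof is exactly this, with no additional ingredients.
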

\begin{proof}
Let $f\colon M\twoheadrightarrow N$ be a surjection of $A$-modules. Then we can write $N$ as the coequalizer of the injection $i\colon\ker(f)\hookrightarrow M$ and the zero map $0\colon\ker(f)\to M$. Since $\fF$ preserves finite colimits it preserves coequalizers, so 
\[\fF(N)=\fF\bigl(\operatorname{coeq}(i,0)\bigr)=\operatorname{coeq}\bigl(\fF(i),\fF(0)\bigr).\]
By Lemma~\ref{lem:coeqalg} it follows that $\fF(N)=\fF(M)/I$, where $I$ is the ideal generated by the elements $\fF(i)(k)-\fF(0)(k)$ for all $k\in\fF(\ker(f))$. Thus, the map $\fF(M)\to\fF(M)/I=\fF(N)$ is surjective. 
\end{proof}

\section{The symmetric algebra and the algebra of divided powers}\label{sec:dp}\label{sec:6}

The symmetric algebra of a module is a very useful construction, see e.g.\ \cite[Appendix~A2.3]{eisen-comalg}. It gives a functor $\sym\colon \moda\to \mathbf{Alg}_A$, which is left adjoint to the forgetful functor $\mathbf{Alg}_A\to\moda$. Hence, the symmetric algebra functor preserves colimits. By Proposition~\ref{prop:presurj} it therefore also preserves surjections. Furthermore, since \[\sym(M)=\bigoplus_{n\ge0}\sym^n(M)\] is a graded algebra it follows by Proposition~\ref{prop:bialg} that $\sym(M)$ has a graded bialgebra structure for any module $M$. From Proposition~\ref{prop:grdfalg} it therefore follows that $\sym(M)^\vee$ is a graded $A$-algebra.

We will now consider another functor $\Gamma\colon\moda\to\mathbf{Alg}_A$. Many of the following definitions and results are taken from \cite{laksov} and can also be found in \cite{Roby1963} and \cite{Rydh}. 
\begin{definition}
Given an $A$-module $M$ we define the \emph{algebra of divided powers of $M$} as the $A$-algebra $\Gamma(M)=A[X(n,x)_{(n,x)\in\N\times M}]/I$, where $I$ is the ideal generated by 
\begin{enumerate}
\item $X(0,x)-1$,
\item $X(n,fx)-f^nX(n,x)$,
\item $X(m,x)X(n,x)-\binom{m+n}{m}X(m+n,x)$,
\item $X(n,x+y)-\sum_{i+j=n}X(i,x)X(j,y)$,
\end{enumerate}
for all $x,y\in M$, $m,n\in\N$ and $f\in A$. This algebra is graded, where the $A$-module $\Gamma^n(M)$ is generated, for every $n\in\N$, by the residue classes of all products $X(n_1,x_1)\cdot\ldots\cdot X(n_k,x_k)$, where $k\ge1$, for all $x_1,\ldots,x_k\in M$ and $n_1,\ldots,n_k\in\N$ with $n_1+\ldots+n_k=n$. We let $\gamma_M^n(x)$ denote the residue class of $X(n,x)$ in $\Gamma(M)$ and $\times$ denote the multiplication in the quotient.
\end{definition}

\begin{remark}
Similarly to the symmetric algebra, the degree one part of $\Gamma(M)$ is $M$. Also, the map $M\mapsto\Gamma(M)$ naturally extends to a functor $\Gamma\colon\moda\to\mathbf{Alg}_A$.
\end{remark}
There is another functor $\mathcal{E}\colon\mathbf{Alg}_A\to\moda$, that takes an $A$-algebra $B$ to the \emph{module of exponential sequences} $\mathcal{E}(B)$ of $B$. We will not define this module here and instead only refer the reader to \cite{Roby1963}. The existence of the functor $\mathcal{E}$ is however very important for the following theorem. 
\begin{theorem}[\cite{Roby1963}, Thm III.1]\label{thm:gpcol}
The algebra of divided powers $\Gamma$ is a left adjoint to the module of exponential sequences $\mathcal{E}$. Thus, the functor $\Gamma$ preserves colimits.
\end{theorem}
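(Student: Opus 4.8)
The plan is to prove the theorem as in \cite{Roby1963}: exhibit a natural bijection
\[\hom_{\mathbf{Alg}_A}\bigl(\Gamma(M),B\bigr)\;\cong\;\hom_{\moda}\bigl(M,\mathcal{E}(B)\bigr)\]
for every $A$-module $M$ and every $A$-algebra $B$, and then conclude that $\Gamma$, being a left adjoint, preserves all colimits of $\moda$. Recall that $\mathcal{E}(B)$ has as underlying set the \emph{exponential sequences} in $B$, i.e.\ the sequences $(b_n)_{n\ge0}$ in $B$ with $b_0=1$ and $b_mb_n=\binom{m+n}{m}b_{m+n}$ for all $m,n\in\N$; addition is the convolution $\bigl((b_n)+(c_n)\bigr)_k=\sum_{i+j=k}b_ic_j$, the action of $f\in A$ is $f\cdot(b_n)=(f^nb_n)_n$, and the neutral element is $(1,0,0,\dots)$. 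A preliminary step is to verify that this is an $A$-module and that $\mathcal{E}$ is a functor $\mathbf{Alg}_A\to\moda$ (with $\mathcal{E}(g)$ acting termwise). Most axioms follow by reindexing, but two points carry content: closure of $\mathcal{E}(B)$ under convolution uses Vandermonde's identity $\sum_i\binom{p}{i}\binom{q}{m-i}=\binom{p+q}{m}$ together with the exponential relation, and the distributivity $(f+g)\cdot(b_n)=f\cdot(b_n)+g\cdot(b_n)$ uses the binomial theorem together with the exponential relation.

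The heart of the argument is the correspondence, and here the presentation of $\Gamma(M)$ by generators $X(n,x)$ modulo the relations (1)--(4) matches the definition of $\mathcal{E}(B)$ term by term. To a homomorphism $\phi\colon\Gamma(M)\to B$ of $A$-algebras I associate the map $\psi\colon M\to\mathcal{E}(B)$, $x\mapsto\bigl(\phi(\gamma_M^n(x))\bigr)_{n\ge0}$: relations (1) and (3) say exactly that each $\psi(x)$ is an exponential sequence, and relations (2) and (4) say exactly that $\psi$ respects the $A$-action and the convolution, so $\psi$ is $A$-linear. Conversely, to a homomorphism $\psi\colon M\to\mathcal{E}(B)$, writing $\psi(x)=(\psi_n(x))_{n\ge0}$, I associate the $A$-algebra map on the polynomial algebra $A[X(n,x)_{(n,x)\in\N\times M}]$ determined by $X(n,x)\mapsto\psi_n(x)$; the four families generating the defining ideal of $\Gamma(M)$ are sent to $0$ precisely because the $\psi(x)$ are exponential sequences (for (1) and (3)) and because $\psi$ is $A$-linear (for (2) and (4)), so the map descends to $\Gamma(M)$. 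These assignments are mutually inverse --- in one direction since the $\gamma_M^n(x)$ generate $\Gamma(M)$ as an $A$-algebra, in the other by construction --- and naturality in $M$ and in $B$ is immediate from the formulas. This establishes the adjunction $\Gamma\dashv\mathcal{E}$.

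Finally, a left adjoint preserves every colimit that exists in its source, so $\Gamma$ preserves arbitrary direct sums, coequalizers and filtered colimits; this is the second assertion, and it is what makes the general machinery of Section~\ref{sec:bialg} apply to $\Gamma$ just as it does to $\sym$. I expect the only genuinely fiddly part to be the verification in the first paragraph that $\mathcal{E}(B)$ is a well-defined $A$-module; the adjunction proper is a direct dictionary between the relations cutting out $\Gamma(M)$ and the axioms defining $\mathcal{E}(B)$, with no hidden difficulty.
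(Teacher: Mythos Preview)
Your proposal is a correct sketch of the standard proof of the adjunction $\Gamma\dashv\mathcal{E}$: the dictionary you set up between the four families of relations defining $\Gamma(M)$ and the axioms of an exponential sequence (plus $A$-linearity) is exactly what makes the bijection work, and the deduction that a left adjoint preserves colimits is immediate. The only genuinely delicate verifications are, as you note, that convolution of exponential sequences is again an exponential sequence (Vandermonde) and that scalar action distributes over convolution (binomial theorem); the rest is bookkeeping.

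By way of comparison, the paper does not prove this statement at all: it is quoted from \cite{Roby1963} without argument, and the paper even declines to define $\mathcal{E}$, referring the reader to Roby. So you have supplied strictly more than the paper does here. Your reconstruction is essentially Roby's original argument, and it is the natural one given the presentation of $\Gamma(M)$ by generators $X(n,x)$ and relations (1)--(4) that the paper uses.
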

An immediate consequence of the previous theorem and Proposition~\ref{prop:presurj} is the following result.
\begin{corollary}\label{cor:gammaissurj}
 The algebra of divided powers preserves surjections.
\end{corollary}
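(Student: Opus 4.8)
The plan is to deduce this immediately from the two results that precede it. By Theorem~\ref{thm:gpcol}, the functor $\Gamma$ is a left adjoint, hence preserves all colimits; in particular it preserves finite colimits, so $\Gamma\colon\moda\to\mathbf{Alg}_A$ is right-exact. Proposition~\ref{prop:presurj} then applies verbatim: a right-exact functor $\moda\to\mathbf{Alg}_A$ preserves surjections, so $\Gamma$ does too. That is the whole argument.

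Concretely, I would mimic the reduction used in the proof of Proposition~\ref{prop:presurj}. Given a surjection $f\colon M\twoheadrightarrow N$ of $A$-modules, write $N$ as the coequalizer of the inclusion $i\colon\ker f\hookrightarrow M$ and the zero map $0\colon\ker f\to M$. Since $\Gamma$ preserves colimits it preserves this coequalizer, so $\Gamma(N)$ is the coequalizer of $\Gamma(i)$ and $\Gamma(0)$ in $\mathbf{Alg}_A$. By Lemma~\ref{lem:coeqalg} this coequalizer is the quotient of $\Gamma(M)$ by the ideal generated by the elements $\Gamma(i)(k)-\Gamma(0)(k)$ for $k\in\Gamma(\ker f)$, and the structure map $\Gamma(M)\to\Gamma(M)/I=\Gamma(N)$ is therefore surjective.

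There is essentially no obstacle; the only point needing care — and presumably the reason this is stated as a corollary rather than folded into Proposition~\ref{prop:presurj} — is to make explicit that the conclusion of Theorem~\ref{thm:gpcol}, namely that $\Gamma$ preserves \emph{all} colimits, entails preservation of the particular finite colimit (a coequalizer) invoked above, so that the right-exactness hypothesis of Proposition~\ref{prop:presurj} is genuinely satisfied. One could instead argue by hand using the explicit presentation of $\Gamma$ from the definition — lift generators of $N$ along $f$ and check that each generator $\gamma_N^n(y)$ of $\Gamma^n(N)$ is the image of $\gamma_M^n(x)$ for any $x$ with $f(x)=y$, so the induced map on each graded piece is onto — but invoking the adjunction is cleaner and is the route I would take.
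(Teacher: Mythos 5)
Your argument is exactly the paper's: the corollary is stated there as an immediate consequence of Theorem~\ref{thm:gpcol} (colimit preservation via the adjunction with $\mathcal{E}$) combined with Proposition~\ref{prop:presurj}, and your unpacking of the coequalizer reduction is just the content of that proposition's proof. Correct, and same route.
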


For a finitely generated and free module, the algebra of divided powers has a particularly nice structure.
\begin{theorem}[\cite{Roby1963}, Thm IV.2]\label{thm:bas}
If $F$ is free with basis $x_1,...,x_n$, then the elements $\gamma_F^{m_1}(x_1)\times...\times\gamma_F^{m_n}(x_n)$, for all $m_1,...,m_n\in\N$, form a basis for the $A$-module $\Gamma(F)$. 
\end{theorem}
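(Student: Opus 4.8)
\emph{Strategy.} The assertion is that a certain family of monomials forms an $A$-basis, and the natural way to build bases of $\Gamma$ of a free module is to split $F$ into rank-one summands and exploit that $\Gamma$ preserves coproducts. Write $F=Ax_1\oplus\dots\oplus Ax_n$. By Theorem~\ref{thm:gpcol} the functor $\Gamma$ preserves colimits, in particular finite coproducts; since the coproduct of $A$-algebras is $\otimes_A$, the maps $\Gamma(\iota_i)\colon\Gamma(Ax_i)\to\Gamma(F)$ induced by the inclusions $\iota_i\colon Ax_i\hookrightarrow F$ assemble into an isomorphism of graded $A$-algebras
\[
\Gamma(Ax_1)\otimes_A\dots\otimes_A\Gamma(Ax_n)\ \xrightarrow{\ \sim\ }\ \Gamma(F),\qquad
a_1\otimes\dots\otimes a_n\ \longmapsto\ \Gamma(\iota_1)(a_1)\times\dots\times\Gamma(\iota_n)(a_n).
\]
Since $\Gamma(\iota_i)$ sends $\gamma_{Ax_i}^{m}(x_i)$ to $\gamma_F^{m}(x_i)$, this isomorphism carries the tensor $\gamma_{Ax_1}^{m_1}(x_1)\otimes\dots\otimes\gamma_{Ax_n}^{m_n}(x_n)$ to $\gamma_F^{m_1}(x_1)\times\dots\times\gamma_F^{m_n}(x_n)$. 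Hence, once each $\Gamma(Ax_i)$ is shown to be $A$-free with basis $\{\gamma_{Ax_i}^{m}(x_i)\}_{m\ge0}$, the theorem follows, because a tensor product of free $A$-modules is free on the tensors of the chosen bases.

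\emph{The rank-one case.} It remains to analyse $\Gamma(Ax)$ when $Ax$ is free of rank one on $x$, so that $f\mapsto fx$ is a bijection $A\to Ax$. I would introduce the free $A$-module $D=\bigoplus_{m\ge0}Ae_m$ equipped with the $A$-bilinear multiplication determined by $e_m\cdot e_n=\binom{m+n}{m}e_{m+n}$, and first verify that $D$ is a commutative associative unital $A$-algebra with unit $e_0$; associativity reduces to the identity $\binom{m+n}{m}\binom{m+n+p}{m+n}=\binom{n+p}{n}\binom{m+n+p}{n+p}$, both sides equalling $(m+n+p)!/(m!\,n!\,p!)$, which is valid in $\Z$ and therefore in $A$. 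Using the presentation of $\Gamma(Ax)$ I would then define an $A$-algebra homomorphism $\varphi\colon\Gamma(Ax)\to D$ by $X(n,fx)\mapsto f^ne_n$ and check that the four relation families (1)--(4) map to zero: relation (1) becomes $e_0=1$; (2) uses $(gf)^n=g^nf^n$; (3) is the defining product of $D$; (4) unwinds via the binomial theorem $(f+g)^n=\sum_{i+j=n}\binom{n}{i}f^ig^j$. Conversely, as $D$ is $A$-free on $\{e_m\}$, there is an $A$-linear map $\psi\colon D\to\Gamma(Ax)$ with $\psi(e_m)=\gamma_{Ax}^{m}(x)$, and $\psi$ is multiplicative on basis elements by relations (3) and (1), hence is an $A$-algebra homomorphism. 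The composites $\varphi\psi$ and $\psi\varphi$ are algebra endomorphisms fixing $e_m$, respectively $\gamma_{Ax}^{m}(x)$, for every $m$; since $\{e_m\}$ generates $D$ and, by relation (2), $\{\gamma_{Ax}^{m}(x)\}$ generates $\Gamma(Ax)$ as an $A$-algebra, both composites are the identity. Thus $\varphi$ is an isomorphism matching $\gamma_{Ax}^{m}(x)$ with $e_m$, so $\{\gamma_{Ax}^{m}(x)\}_{m\ge0}$ is an $A$-basis of $\Gamma(Ax)$, which is exactly what the rank-one case requires.

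\emph{Main obstacle.} The genuinely non-formal step is the linear independence of the elements $\gamma_{Ax}^{m}(x)$ in the rank-one case; everything else is bookkeeping with the presentation and with the coproduct. The auxiliary algebra $D$ is precisely the device that forces this independence, and the only arithmetic input it requires --- the multinomial identity used for associativity --- is what makes the construction work over an arbitrary ring. A minor point worth making explicit is the compatibility of the reduction with the stated monomials: one must observe that the coproduct isomorphism is the one built from the structure maps $\Gamma(\iota_i)$, so that it does send $\gamma_{Ax_1}^{m_1}(x_1)\otimes\dots\otimes\gamma_{Ax_n}^{m_n}(x_n)$ to $\gamma_F^{m_1}(x_1)\times\dots\times\gamma_F^{m_n}(x_n)$; this is immediate from the universal property of the coproduct.
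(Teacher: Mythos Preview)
The paper does not supply its own proof of this theorem; it is quoted verbatim from Roby \cite{Roby1963}, Th\'eor\`eme~IV.2, without argument. Your proposal is correct and is in fact essentially Roby's original proof: he too reduces to the rank-one case via the isomorphism $\Gamma(M\oplus N)\cong\Gamma(M)\otimes_A\Gamma(N)$ (his Th\'eor\`eme~III.4, which the present paper records as Theorem~\ref{thm:gpcol}), and then identifies $\Gamma(A)$ explicitly with the free $A$-module on the divided powers $\gamma^m(1)$ equipped with the multiplication $e_me_n=\binom{m+n}{m}e_{m+n}$. Your auxiliary algebra $D$ is exactly this model, and your verification that the presentation relations (1)--(4) hold in $D$ is the substance of Roby's argument in the rank-one case. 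One small remark: when you say that $\{\gamma_{Ax}^m(x)\}_{m\ge0}$ generates $\Gamma(Ax)$ as an $A$-algebra ``by relation~(2)'', it is worth noting that relation~(2) already gives $\gamma^m(fx)=f^m\gamma^m(x)$, so these elements even generate $\Gamma(Ax)$ as an $A$-\emph{module} once relation~(3) is used to collapse products; either observation suffices to conclude $\psi\varphi=\id$.
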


We also have the following property, which we will not use, but which we state in order to see the relation to Theorem~\ref{thm:symgrdg}. %It can be proved by a method similar to what we will use for Theorem~\ref{thm:symgrdg}.
\begin{theorem}[\cite{laksov}, Theorem~5.6]\label{thm:visaan}
Let $M$ be an $A$-module. Then, there is a homomorphism of graded $A$-algebras
\[\Gamma(M^\ast)\to\sym(M)\grd.\]
If $M$ is free, then this map is an isomorphism.
\end{theorem}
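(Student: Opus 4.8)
The statement to prove is Theorem~\ref{thm:visaan}: for any $A$-module $M$, there is a homomorphism of graded $A$-algebras $\Gamma(M^\ast)\to\sym(M)\grd$, which is an isomorphism when $M$ is free.

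My plan is to build the map degree by degree and then check it respects the algebra structures, using the general machinery from Section~\ref{sec:bialg}. First I would observe that by Proposition~\ref{prop:grdfalg}, $\sym(M)\grd$ is a graded $A$-algebra, since $\sym$ preserves colimits and its multiplication is induced by addition on $M$. So the target makes sense. Now in degree $n$, $\sym(M)\grd_n = \hom_A(\sym^n(M), A)$. There is a classical pairing between divided powers and symmetric powers: an element $\gamma^{n_1}(f_1)\times\cdots\times\gamma^{n_k}(f_k)\in\Gamma^n(M^\ast)$, with $f_i\in M^\ast=\hom_A(M,A)$ and $\sum n_i = n$, should be sent to the linear functional on $\sym^n(M)$ that on a product $x_1\cdots x_n$ evaluates to the permanent-type sum $\sum_{\sigma} \prod_i f_{\iota(\sigma(i))}(x_i)$, normalized so that the divided-power relations $\gamma^m(f)\gamma^{n}(f)=\binom{m+n}{m}\gamma^{m+n}(f)$ are respected. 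The cleanest way to produce this without writing coordinates is functorially: for $M$ free, use Theorem~\ref{thm:bas} to write down the map on basis elements and check it is an isomorphism directly (matching monomial bases $\gamma_F^{m_1}(x_1^\ast)\times\cdots$ against the dual basis of the monomial basis of $\sym^n(F)$), then for general $M$ choose a presentation $F_1\to F_0\to M\to 0$ and use right-exactness of both $\sym$ and $\Gamma$ (the latter by Theorem~\ref{thm:gpcol} and Corollary~\ref{cor:gammaissurj}), together with the contravariant behaviour of $(-)\grd$, to descend the map from the free case.

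More concretely, the key steps in order: (1) verify $\sym(M)\grd$ is a graded algebra via Proposition~\ref{prop:grdfalg}; (2) construct the natural transformation $\Gamma((-)^\ast)\to\sym(-)\grd$ on the full subcategory of finitely generated free modules, checking on bases via Theorem~\ref{thm:bas} that it is a graded-algebra isomorphism there — the multiplication on $\sym(M)\grd$ in degrees $(m,n)$ is, by Proposition~\ref{prop:grdalg}, dual to the comultiplication $\sym^{m+n}(M)\to\sym^m(M)\otimes\sym^n(M)$ coming from the diagonal, and one checks this matches the divided-power product $\gamma^m\times\gamma^n = \binom{m+n}{m}\gamma^{m+n}$ on a single generator; (3) for arbitrary $M$, pick a free presentation $F_1\xrightarrow{\phi} F_0\to M\to 0$; then $M^\ast = \ker(\phi^\ast\colon F_0^\ast\to F_1^\ast)$, but more usefully one uses that $\sym$ and $\Gamma$ both convert the cokernel $M$ into a cokernel, and $(-)\grd$ converts those cokernels into kernels (left-exactness of $\hom_A(-,A)$ in each degree), so $\Gamma(M^\ast)$ and $\sym(M)\grd$ each sit as a suitable equalizer/kernel built from the free pieces; (4) naturality of the free-case transformation then induces the map for $M$ and shows it is a well-defined graded-algebra homomorphism.

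The main obstacle is step (3)–(4): dualization does not commute with taking the module of relations, and one has to be careful that $M^\ast$ is not simply $\coker$ of anything nice — rather $\Gamma(M^\ast)$ must be accessed through the left-exactness of duality applied to a free resolution of $M^\ast$ itself (which exists since $A$ is noetherian and $M^\ast$ is finitely generated), while $\sym(M)\grd$ is accessed through a free presentation of $M$. Reconciling these two descriptions — showing the naturally-induced map is the same from both sides and that it lands in the right place — is where the real work lies; the free case and the bialgebra formalism are comparatively routine, being just bookkeeping with the bases of Theorem~\ref{thm:bas} and the duality of Proposition~\ref{prop:grdalg}. Since the paper explicitly says it will not use this theorem, I would keep the argument brief, proving the free case in detail and then invoking right-exactness of $\sym$ and $\Gamma$ together with the fact that a natural transformation between right-exact functors is determined by its values on frees, and noting that in the free case it is an isomorphism.
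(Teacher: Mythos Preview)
The paper does not prove Theorem~\ref{thm:visaan}; it is quoted from \cite{laksov} without proof and is explicitly flagged as a result the paper will not use. So there is no in-paper argument to compare against.

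That said, your proposal has a real gap in the reduction step. You plan to construct the map on free modules and then descend via ``a natural transformation between right-exact functors is determined by its values on frees''. But neither $M\mapsto\Gamma(M^\ast)$ nor $M\mapsto\sym(M)\grd$ is right-exact in $M$: both involve a dualization, so they are contravariant and then covariant, and a presentation $F_1\to F_0\to M\to 0$ does \emph{not} yield a presentation of $\Gamma(M^\ast)$ (you only get that $M^\ast$ is a kernel, and $\Gamma$ does not preserve kernels). You notice this obstacle yourself in step~(3)--(4) but do not resolve it; the final sentence waving at right-exactness does not actually apply to these functors.

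The direct construction you mention first, and then abandon, is the one that works cleanly for arbitrary $M$: send $\gamma^n(f)\in\Gamma^n(M^\ast)$ to the functional in $\sym^n(M)^\ast$ given by $x_1\cdots x_n\mapsto f(x_1)\cdots f(x_n)$, and check the four defining relations of $\Gamma$ hold in $\sym(M)\grd$ (the relation $\gamma^m(f)\times\gamma^n(f)=\binom{m+n}{m}\gamma^{m+n}(f)$ matches precisely because the multiplication on $\sym(M)\grd$ is dual to the diagonal comultiplication, which introduces the $\binom{m+n}{m}$ ways to split). This gives the graded-algebra map for any $M$ with no presentation needed; the free case is then the isomorphism check via Theorem~\ref{thm:bas}, exactly as you outline. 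This is the approach of \cite{laksov}.
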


Actually, we can see, similarly to the symmetric algebra, that $\Gamma(M)$ is not only a graded $A$-algebra but also a graded coalgebra. Indeed, since $\Gamma$ preserves colimits by Theorem~\ref{thm:gpcol}, the result follows from Proposition~\ref{prop:bialg}. We can also compute the co-multiplication \[\Delta\colon \Gamma(M)\to\Gamma(M)\otimes_A\Gamma(M)\] explicitly via the composition $\Gamma(M)\to\Gamma(M\oplus M)\cong\Gamma(M)\otimes_A\Gamma(M)$, as
\begin{align*}
\gamma_M^n(x)&\mapsto\gamma_M^n((x,x))=\gamma_M^n((x,0)+(0,x))=\\&=
\sum_{i+j=n}\gamma_M^i(x,0)\times\gamma_M^j(0,x)\mapsto
\sum_{i+j=n}\gamma_M^i(x)\otimes\gamma_M^j(x).
\end{align*}
%\[\gamma_M^n(x)\mapsto\gamma_M^n((x,x))=\gamma_M^n((x,0)+(0,x))=\!\!\!\sum_{i+j=n}\gamma_M^i(x,0)\times\gamma_M^j(0,x)\mapsto\!\!\!\sum_{i+j=n}\gamma_M^i(x)\otimes\gamma_M^j(x).\]
\begin{example}If $F$ is free with basis $x_1,\ldots,x_n$, we have, by Theorem~\ref{thm:bas}, that the elements $\gamma_F^{m_1}(x_1)\times...\times\gamma_F^{m_n}(x_n)$, with $m_1,\ldots,m_n\in\N$, form a basis of $\Gamma(F)$ as a module. For future reference, we calculate the comultiplication $\Delta\colon\Gamma(F)\to\Gamma(F)\otimes_A\Gamma(F)$ applied  to a basis element of $\Gamma(F)$:
\begin{align*}
\gamma_F^{m_1}(x_1)\times\ldots&\times\gamma_F^{m_n}(x_n)\mapsto\gamma_F^{m_1}((x_1,x_1))\times\ldots\times\gamma_F^{m_n}((x_n,x_n))\mapsto\\
&\mapsto\left(\sum_{a_1+b_1=m_1} \gamma_F^{a_1}(x_1)\otimes\gamma_F^{b_1}(x_1)\right)\times\ldots\times\left(\sum_{a_n+b_n=m_n} \gamma_F^{a_n}(x_n)\otimes\gamma_F^{b_n}(x_n)\right)=\\
&=\sum_{\substack{a_j+b_j=m_j\\j\in\{1,\ldots,n\}}}\big(\gamma_F^{a_1}(x_1)\times\ldots\times\gamma_F^{a_n}(x_n)\big)\otimes\big(\gamma_F^{b_1}(x_1)\times\ldots\times\gamma_F^{b_n}(x_n)\big).
\end{align*}
%that is, the co-multiplication $\Delta\colon\Gamma(F)\to\Gamma(F)\otimes\Gamma(F)$ is defined by
%\begin{equation*}\label{eq:label}
%\Delta\left(\gamma_F^{m_1}(x_1)\times...\times\gamma_F^{m_n}(x_n)\right)=\sum_{\substack{a_j+b_j=m_j\\j\in\{1,...,n\}}}\big(\gamma_F^{a_1}(x_1)\times...\times\gamma_F^{a_n}(x_n)\big)\otimes\big(\gamma_F^{b_1}(x_1)\times...\times\gamma_F^{b_n}(x_n)\big).
%\end{equation*}
\end{example}

Taking the graded dual of the algebra of divided powers $\Gamma(M)$ of a module $M$ gives, by Proposition~\ref{prop:grdfalg}, a multiplication \[\bullet\colon\Gamma(M)\grd\otimes_A\Gamma(M)\grd\to\Gamma(M)\grd,\] 
inducing an algebra structure on $\Gamma(M)^\vee$. This multiplication is graded and sends an element $u\otimes v\in\Gamma^i(M)^\ast\otimes_A\Gamma^j(M)^\ast$ to $u\bullet v\in\Gamma^{i+j}(M)^\ast$ defined, for all $\gamma\in\Gamma^{i+j}(M)$, by
\[(u\bullet v)(\gamma)=(u\otimes v)(\Delta(\gamma))\in A.\] 
In the case $\gamma=\gamma^{i+j}_M(x)$, for some $x\in M$, we  have that
\begin{multline*}\label{eq:mult1}(u\bullet v)\left(\gamma_M^{i+j}(x)\right)=(u\otimes v)\left(\Delta\left(\gamma_M^{i+j}(x)\right)\right)=(u\otimes v)\left(\sum_{a+b=i+j}\gamma_M^a(x)\otimes\gamma_M^b(x)\right)=\\ =\sum_{a+b=i+j}u\big(\gamma_M^a(x)\big)\otimes v\big(\gamma_M^b(x)\big)=u\bigl(\gamma_M^i(x)\bigr)\cdot v\bigl(\gamma_M^j(x)\bigr).\end{multline*}

\begin{theorem}\label{thm:gammadual}
If $F$ is free with basis $x_1,\ldots,x_n$, then there is a graded $A$-algebra isomorphism $A[\gamma_F^1(x_1)^\ast,\ldots,\gamma_F^1(x_n)^\ast]\to \Gamma(F)\grd$.
\end{theorem}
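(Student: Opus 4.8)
The plan is to write down the obvious candidate map and prove it is an isomorphism by comparing monomial bases on the two sides. Write $u_i=\gamma_F^1(x_i)^\ast\in\Gamma^1(F)^\ast$. By Propositions~\ref{prop:bialg} and~\ref{prop:grdalg}, $\Gamma(F)\grd$ is a commutative graded $A$-algebra; since a polynomial $A$-algebra is free commutative on its generators, there is a unique $A$-algebra homomorphism
\[\Phi\colon A[\gamma_F^1(x_1)^\ast,\ldots,\gamma_F^1(x_n)^\ast]\to\Gamma(F)\grd\]
with $\Phi(\gamma_F^1(x_i)^\ast)=u_i$, and it is graded because each $u_i$ lies in degree~$1$. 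I claim this $\Phi$ is the asserted isomorphism.

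To reduce the claim to a computation, recall from Theorem~\ref{thm:bas} that the elements $\gamma_F^{m_1}(x_1)\times\cdots\times\gamma_F^{m_n}(x_n)$, indexed by $(m_1,\ldots,m_n)\in\N^n$, form an $A$-basis of $\Gamma(F)$, and that for each $d$ only the finitely many with $m_1+\cdots+m_n=d$ lie in $\Gamma^d(F)$. Hence every $\Gamma^d(F)$ is free of finite rank, so the dual elements $e_{m_1,\ldots,m_n}\in\Gamma^{m_1+\cdots+m_n}(F)^\ast$ form an $A$-basis of $\Gamma(F)\grd$. On the source, the monomials $\prod_i(\gamma_F^1(x_i)^\ast)^{m_i}$ form an $A$-basis. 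It therefore suffices to show
\[\Phi\Bigl(\textstyle\prod_i(\gamma_F^1(x_i)^\ast)^{m_i}\Bigr)=e_{m_1,\ldots,m_n}\]
for every $(m_1,\ldots,m_n)$, since then $\Phi$ carries an $A$-basis bijectively onto an $A$-basis and is an isomorphism of graded $A$-algebras.

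For the computation, set $N=m_1+\cdots+m_n$, so $\Phi(\prod_i(\gamma_F^1(x_i)^\ast)^{m_i})=u_1^{\bullet m_1}\bullet\cdots\bullet u_n^{\bullet m_n}\in\Gamma^N(F)^\ast$. From the definition of $\bullet$ and the coassociativity of $\Delta$ (Proposition~\ref{prop:bialg}), an induction gives, for degree-one functionals $v_1,\ldots,v_N\in\Gamma^1(F)^\ast$,
\[(v_1\bullet\cdots\bullet v_N)(\gamma)=(v_1\otimes\cdots\otimes v_N)\bigl(\Delta^{(N)}(\gamma)\bigr),\]
where $\Delta^{(N)}\colon\Gamma(F)\to\Gamma(F)^{\otimes N}$ is the $N$-fold comultiplication. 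Since the $v_j$ vanish off $\Gamma^1(F)$, only the component of $\Delta^{(N)}(\gamma)$ in $\Gamma^1(F)^{\otimes N}$ contributes, and in particular the left side is zero unless $\gamma$ has degree $N$. So fix $\gamma=\gamma_F^{k_1}(x_1)\times\cdots\times\gamma_F^{k_n}(x_n)$ with $k_1+\cdots+k_n=N$. Because $\Delta^{(N)}$ is an algebra homomorphism and, iterating the comultiplication computed before Theorem~\ref{thm:gammadual}, $\Delta^{(N)}(\gamma_F^k(x))=\sum_{a_1+\cdots+a_N=k}\gamma_F^{a_1}(x)\otimes\cdots\otimes\gamma_F^{a_N}(x)$, multiplying out over $i=1,\ldots,n$ and extracting the part of total degree one in each tensor slot shows the $\Gamma^1(F)^{\otimes N}$-component of $\Delta^{(N)}(\gamma)$ equals $\sum_f\gamma_F^1(x_{f(1)})\otimes\cdots\otimes\gamma_F^1(x_{f(N)})$, the sum over all maps $f\colon\{1,\ldots,N\}\to\{1,\ldots,n\}$ with $\#f^{-1}(i)=k_i$ for every $i$. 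Now take $v_j=u_i$ for $j$ in the $i$-th block $\{m_1+\cdots+m_{i-1}+1,\ldots,m_1+\cdots+m_i\}$ and let $\beta$ denote this block map. Using $u_i(\gamma_F^1(x_\ell))=\delta_{i\ell}$, each summand evaluates to $\prod_j\delta_{\beta(j),f(j)}$, which is $1$ when $f=\beta$ and $0$ otherwise; and $f=\beta$ occurs in the sum precisely when $k_i=m_i$ for all $i$. Hence $u_1^{\bullet m_1}\bullet\cdots\bullet u_n^{\bullet m_n}$ sends $\gamma_F^{k_1}(x_1)\times\cdots\times\gamma_F^{k_n}(x_n)$ to $1$ if $(k_1,\ldots,k_n)=(m_1,\ldots,m_n)$ and to $0$ otherwise, i.e.\ it equals $e_{m_1,\ldots,m_n}$, proving the claim.

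The only genuine work is the combinatorial bookkeeping of the last paragraph: isolating the degree-one part of the iterated comultiplication of a product of divided powers and recognising it as a single dual basis vector. The remaining points — that $\Gamma(F)\grd$ is a commutative $A$-algebra, so that $\Phi$ exists, and that each $\Gamma^d(F)$ is finite free, so that the dual basis really is an $A$-basis of the whole graded dual — are immediate from Theorem~\ref{thm:bas} and Propositions~\ref{prop:bialg}–\ref{prop:grdalg}.
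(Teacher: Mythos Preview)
Your proof is correct and follows essentially the same route as the paper's: both use Theorem~\ref{thm:bas} to obtain a dual basis of $\Gamma(F)\grd$ and then verify that the monomials $u_1^{\bullet m_1}\bullet\cdots\bullet u_n^{\bullet m_n}$ coincide with the dual basis vectors by evaluating on basis elements of $\Gamma(F)$ via the explicit comultiplication. The only cosmetic difference is that the paper proves the identity $\gamma_F^{\bar m}(\bar x)^\ast\bullet\gamma_F^1(x_i)^\ast=\gamma_F^{\bar m+\bar e_i}(\bar x)^\ast$ and concludes by induction on degree, whereas you unroll the induction into a single $N$-fold comultiplication computation.
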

\begin{proof}
By Theorem~\ref{thm:bas} the elements $\gamma_F^{m_1}(x_1)\times\ldots\times\gamma_F^{m_n}(x_n)$, for all $m_1,\ldots,m_n\in\N$, form a basis of $\Gamma(F)$ as a module. Thus, the elements $\bigl(\gamma_F^{m_1}(x_1)\times\ldots\times\gamma_F^{m_n}(x_n)\bigr)^\ast$ constitute a basis of $\Gamma(F)\grd$. We will show that 
\[\bigl(\gamma_F^{m_1}(x_1)\times\ldots\times\gamma_F^{m_n}(x_n)\bigr)^\ast=\big(\gamma_F^1(x_1)^\ast\big)^{m_1}\bullet\ldots\bullet\big(\gamma_F^1(x_n)^\ast\big)^{m_n}.\]
It is trivially true in degree $1$. Let us introduce some multi-index notation and write \[\gamma_F^{\bar{m}}(\bar{x})=\gamma_F^{m_1}(x_1)\times\ldots\times\gamma_F^{m_n}(x_n)\] for $\bar{m}=(m_1,\ldots,m_n)\in\N^n$ and let us consider $\gamma_F^{\bar{m}}(\bar{x})^\ast\bullet\gamma_F^1(x_i)^\ast\in \Gamma^{m_1+\ldots+m_n+1}(F)^\ast$. In order to see what this product is, we apply it to the basis elements of degree $m_1+\ldots+m_n+1$ in $\Gamma(F)$, i.e., elements $\gamma_F^{\bar{r}}(\bar{x})$ where $\bar{r}=(r_1,\ldots,r_n)\in\N^n$ with $r_1+\ldots+r_n=m_1+\ldots+m_n+1$. Letting $\bar{e}_i$ denote the element $(0,\ldots,1,\ldots,0)\in\N^n$, with a $1$ in the $i$:th position, we have that 
\begin{align*}
\left(\gamma_F^\bar{m}(\bar{x})^\ast\bullet\gamma_F^1(x_i)^\ast\right)\left(\gamma_F^\bar{r}(\bar{x})\right)&= \left(\gamma_F^\bar{m}(\bar{x})^\ast\otimes\gamma_F^1(x_i)^\ast\right)\left(\Delta\big(\gamma_F^\bar{r}(\bar{x})\big)\right)=\\
&=\Big(\gamma_F^\bar{m}(\bar{x})^\ast\otimes\gamma_F^1(x_i)^\ast\Big) \left(\sum_{\substack{\bar{a}+\bar{b}=\bar{r}\\\bar{a},\bar{b}\in\N^n}}\gamma_F^\bar{a}(\bar{x})\otimes\gamma_F^\bar{b}(\bar{x})\right)=\\
&=\sum_{\substack{\bar{a}+\bar{b}=\bar{r}\\\bar{a},\bar{b}\in\N^n}}\left(\gamma_F^\bar{m}(\bar{x})^\ast\big(\gamma_F^\bar{a}(\bar{x})\big)\cdot\gamma_F^{1}(x_i)^\ast\big(\gamma_F^\bar{b}(\bar{x})\big)\right)=\\
&=\gamma_F^\bar{m}(\bar{x})^\ast\big(\gamma_F^{\bar{r}-\bar{e}_i}(\bar{x})\big)\cdot\gamma_F^1(x_i)^\ast\big(\gamma_F^1(x_i)\big)=\\
&=\begin{cases}
1,&\text{if }\bar{r}-\bar{e}_i=\bar{m},\\
0,&\text{else}.
\end{cases}
\end{align*}
Hence, \[\gamma_F^\bar{m}(\bar{x})^\ast\bullet\gamma_F^1(x_i)^\ast=\gamma_F^{\bar{m}+\bar{e}_i}(\bar{x})^\ast.\]
The result now follows by induction on the degree.
\end{proof}

\begin{theorem}\label{thm:symgrdg}
Let $M$ be a finitely generated $A$-module. Then, the canonical module homomorphism  $M^\ast\to\Gamma(M)^\vee$, sending $M^\ast$ into the degree $1$ part of $\Gamma(M)^\vee$, which is $M^\ast$, induces a natural homomorphism of graded $A$-algebras 
\[\sym(M^\ast)\to\Gamma(M)^\vee.\]
If $M$ is free, then this map is an isomorphism.
\end{theorem}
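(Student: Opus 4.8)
The plan is to obtain the map from the universal property of the symmetric algebra, then check it is graded and natural, and finally identify it with the isomorphism of Theorem~\ref{thm:gammadual} in the free case.

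First I would record that $\Gamma(M)^\vee$ is a commutative graded $A$-algebra. It is a graded $A$-algebra by Proposition~\ref{prop:grdfalg} applied to $\fF=\Gamma$, which is legitimate since $\Gamma$ preserves colimits and the multiplication on $\Gamma(M)$ is induced by the addition on $M$. It is commutative because the comultiplication on $\Gamma(M)$, being $\Gamma(\Delta)$, is cocommutative by Proposition~\ref{prop:bialg}, and the multiplication on the graded dual induced by a cocommutative comultiplication is commutative, as in Proposition~\ref{prop:grdalg}. Since the degree $1$ component of $\Gamma(M)$ is $M$, the degree $1$ component of $\Gamma(M)^\vee$ is $\Gamma^1(M)^\ast=M^\ast$; let $j\colon M^\ast\to\Gamma(M)^\vee$ be the inclusion of this component. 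As $\sym$ is left adjoint to the forgetful functor from commutative $A$-algebras to $A$-modules, $j$ extends uniquely to an $A$-algebra homomorphism $\Phi_M\colon\sym(M^\ast)\to\Gamma(M)^\vee$. It is graded: the multiplication on $\Gamma(M)^\vee$ carries $\Gamma^i(M)^\ast\otimes_A\Gamma^j(M)^\ast$ into $\Gamma^{i+j}(M)^\ast$, so a product of $k$ elements of $M^\ast$ lies in $\Gamma^k(M)^\ast$, i.e.\ $\Phi_M(\sym^k(M^\ast))\subseteq\Gamma^k(M)^\ast$.

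For naturality, given $f\colon M\to N$ I would consider the square with top $\Phi_N$, bottom $\Phi_M$, left edge $\sym(f^\ast)$, and right edge $\Gamma(f)^\vee$ (the latter a homomorphism of graded algebras by Proposition~\ref{prop:grdfalg}). Both composites $\sym(N^\ast)\to\Gamma(M)^\vee$ are $A$-algebra homomorphisms and they agree on the degree $1$ part $N^\ast$, where each is simply $f^\ast$ followed by the inclusion $j$; by the uniqueness in the universal property of $\sym(N^\ast)$ the square commutes, so $\Phi$ is natural. For the last assertion I would take $M=F$ free; since $F$ is finitely generated it has a finite basis $x_1,\dots,x_n$, and $F^\ast$ is free on the dual basis. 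Then $\sym(F^\ast)$ is the polynomial ring $A[x_1^\ast,\dots,x_n^\ast]$, and under the identification of $F^\ast$ with the degree $1$ component of $\Gamma(F)^\vee$ the generator $x_i^\ast$ corresponds to $\gamma_F^1(x_i)^\ast$. Hence $\Phi_F$ is, after renaming generators, exactly the homomorphism $A[\gamma_F^1(x_1)^\ast,\dots,\gamma_F^1(x_n)^\ast]\to\Gamma(F)^\vee$ of Theorem~\ref{thm:gammadual} — two graded $A$-algebra homomorphisms out of a polynomial ring agreeing on the generators coincide — and that map is an isomorphism, so $\Phi_F$ is an isomorphism.

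The construction, the grading, and the naturality are all formal consequences of the adjunction defining $\sym$ together with Propositions~\ref{prop:bialg}, \ref{prop:grdalg} and \ref{prop:grdfalg}. The only point demanding care is the bookkeeping of the canonical identifications ($\Gamma^1(M)=M$, the degree $1$ part of $\Gamma(M)^\vee$ equal to $M^\ast$, and the canonical map between them) so that in the free case $\Phi_F$ is literally the homomorphism of Theorem~\ref{thm:gammadual} rather than merely a graded algebra map agreeing with it in degree $1$; once this is pinned down, the isomorphism statement is immediate from Theorem~\ref{thm:gammadual}.
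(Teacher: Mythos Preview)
Your proof is correct and follows essentially the same route as the paper: obtain the map from the universal property of $\sym$, then in the free case identify it with the isomorphism of Theorem~\ref{thm:gammadual} by choosing a basis. You supply more detail than the paper does---checking explicitly that the map is graded and natural, and verifying the identification in degree~$1$---but the argument is the same.
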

\begin{proof}
The homomorphism of graded $A$-algebras $\sym(M^\ast)\to\Gamma(M)^\vee$ is given by the universal property of the symmetric algebra. 

Suppose now that $M$ is free. Choosing a basis $x_1,\ldots,x_n$ of $M$ gives a dual basis $x_1^\ast,\ldots,x_n^\ast$ of $M^\ast$. Then, it follows that $\sym(M^\ast)\cong A[x_1^\ast,\ldots,x_n^\ast]$, and the result follows by Theorem~\ref{thm:gammadual}.
\end{proof}

%\begin{remark}\label{rem:oprecis}
%Note the symmetry of the statements of Theorem~\ref{thm:visaan} and Theorem~\ref{thm:symgrdg}.
%\end{remark}

\begin{remark}\label{rem:gammadualnotfinite}
The homomorphism $\sym(M^\ast)\to\Gamma(M)^\vee$ is not a surjection in general. Equivalently, the algebra $\Gamma(M)^\vee$ is generally not generated in degree $1$. 
%For instance, let $A=\F_2[x]/x^2$ and let $M=A/x$. Then, $\Gamma(M)^\vee\cong A[xT,T^2,T^3,...]=A[xT,T^2,T^3]$ and $\sym(M^\ast)\cong A[xT]$. As the element $T^2\in A[xT,T^2,T^3]$ does not lie in the image of the homomorphism $A[xT]\to A[xT,T^2,T^3]$, it follows that the graded homomorphism $\sym(M^\ast)\to\Gamma(M)^\vee$ is not surjective. Similarly, we see that the graded dual $\Gamma(M)^\vee$ is not necessarily generated in degree $1$, even in characteristic $0$. 
For instance, let $A=\C[x]/x^2$ and let $M=A/x$. Then $M$ is the cokernel of the map $f\colon A\to A$, given by $f(1)=x$. Letting $0\colon A\to A$ denote the zero homomorphism, we have that $M$ is the coequalizer of $f$ and $0$. As $\Gamma$ preserves coequalizers by Theorem~\ref{thm:gpcol}, it follows that  $\Gamma(M)=\operatorname{coeq}(\Gamma(f),\Gamma(0))$. Therefore, by Lemma~\ref{lem:coeqalg}, $\Gamma(M)=\Gamma(A)/I$ where $I$ is the ideal generated by $\Gamma(f)(\gamma)-\Gamma(0)(\gamma)$ for all $\gamma\in\Gamma(A)$. In fact, it follows that the ideal $I$ is generated by $x\gamma_A^1(1)$. That is, we have an isomorphism $\Gamma(M)\cong A[y]/xy$, where $y=\gamma_A^1(1)$ has degree $1$. By taking the graded dual we get that $\Gamma(M)^\vee\cong A[xT,xT^2,xT^3,...]$ as a subring of $\Gamma(A)^\vee\cong\sym(A^\ast)\cong A[T]$.  %$\Gamma(M)= for any $m\in M$ and any $n\ge1$, it follows that \[x\gamma_M^n(m)=\frac{x}{n}\gamma_M^1(m)\times\gamma_M^{n-1}(m)=\frac{1}{n}\gamma_M^1(xm)\times\gamma_M^{n-1}(m)=0.\]
%Therefore, there is an isomorphism of modules $\Gamma^n(M)\cong M$ for all $n\ge1$. Taking the graded dual gives that $\Gamma(M)^\vee\cong A[xT,xT^2,xT^3,...]$. 
The polynomial algebra $A[xT,xT^2,xT^3,...]$ is not generated in degree $1$ and is not even finitely generated as an $A$-algebra. However, we will only be considering the image of the homomorphism $\sym(M^\ast)\to\Gamma(M)^\vee$, and  this is generated in degree $1$ since $\sym(M^\ast)$ is.
\end{remark}

\section{An intrinsic definition of Rees algebras}%The Rees algebra of modules}
\label{sec:rees}\label{sec:7}
The definition of the Rees algebra of a finitely generated module $M$, due to \cite{reesbud}, is in terms of maps from $M$ to free modules. We saw in Section~\ref{sec:vers} that the Rees algebra of $M$ can be computed using a versal map $M\to F$, where $F$ is a finitely generated and free module. There we also showed that any versal map factors via the double dual of $M$. %However, we saw in Section~\ref{sec:verfunc}, using coherent functors, that the double dual is a special case of taking global section of a functor connected to two different duals. In particular, we noted that the functor $\fG_M=\im\big(t_M\to h^{M^\ast}\big)$ is strongly connected to the Rees algebra of $M$.

Analogously, using the algebra of divided powers, we will now show that the Rees algebra of a module can be defined as the image of a canonical map out of $\sym(M)$, which is intrinsic to $M$ and not given by maps to free modules. %To do this we will however start with a versal map $M\to F$, where $F$ is a finitely generated and free $A$-module.
\begin{lemma}\label{lem:injektivgdual}
Let $M$ be a finitely generated $A$-module and let $F$ be a finitely generated and free $A$-module. If $M\to F$ is a versal map, then there is an injective homomorphism $\Gamma(M^\ast)\grd\hookrightarrow\Gamma(F^\ast)\grd$ of graded $A$-algebras.
\end{lemma}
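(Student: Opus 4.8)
The plan is to deduce this from the already-established functoriality of graded duals together with the key fact that a versal map $M\to F$ dualizes to a \emph{surjection} $F^\ast\twoheadrightarrow M^\ast$ (Proposition~\ref{prop:versur}). First I would observe that, since $\Gamma\colon\moda\to\mathbf{Alg}_A$ preserves colimits (Theorem~\ref{thm:gpcol}) and its multiplication on $\Gamma(N)$ is induced by addition on $N$, Proposition~\ref{prop:grdfalg} applies: any module homomorphism $N\to N'$ induces a homomorphism $\Gamma(N')\grd\to\Gamma(N)\grd$ of graded $A$-algebras. Applying this to the surjection $\phi^\ast\colon F^\ast\twoheadrightarrow M^\ast$ gives a graded $A$-algebra homomorphism $\Gamma(M^\ast)\grd\to\Gamma(F^\ast)\grd$, which is the map we want to show is injective.

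To get injectivity, I would pass through the surjectivity of $\Gamma$ on surjections. By Corollary~\ref{cor:gammaissurj}, the algebra of divided powers preserves surjections, so $\Gamma(\phi^\ast)\colon\Gamma(F^\ast)\to\Gamma(M^\ast)$ is a surjection of graded $A$-algebras; in particular, in each degree $n$ the map $\Gamma^n(F^\ast)\to\Gamma^n(M^\ast)$ is a surjection of $A$-modules. Now dualizing a surjection of $A$-modules yields an injection: applying $\hom_A(-,A)$ to $\Gamma^n(F^\ast)\twoheadrightarrow\Gamma^n(M^\ast)$ gives an injection $\Gamma^n(M^\ast)^\ast\hookrightarrow\Gamma^n(F^\ast)^\ast$ for every $n\ge0$. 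Taking the direct sum over all $n$ shows that the induced map $\Gamma(M^\ast)\grd\to\Gamma(F^\ast)\grd$ is injective as a map of $A$-modules. Combined with the previous paragraph, this map is simultaneously a graded $A$-algebra homomorphism and injective, which is exactly the assertion.

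The only subtlety — and the step I would be most careful about — is checking that the graded-algebra homomorphism produced by Proposition~\ref{prop:grdfalg} really is, degreewise, nothing other than the $A$-linear dual of the degreewise surjection $\Gamma^n(\phi^\ast)$. This is where left-exactness of $\hom_A(-,A)$ does the work: the construction in Propositions~\ref{prop:grdalg} and~\ref{prop:grdfalg} takes the graded dual of the functorial map $\Gamma(\phi^\ast)$, and the graded dual is by definition the degreewise $\hom_A(-,A)$. So there is no ambiguity: the map on $\Gamma^n(M^\ast)^\ast$ is precisely $\hom_A\!\bigl(\Gamma^n(\phi^\ast),A\bigr)$, and its injectivity is the standard fact that $\hom_A(-,A)$ is left-exact. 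Hence no finiteness or projectivity hypothesis beyond those already in force is needed for this lemma, and the proof is short once the bookkeeping is in place.
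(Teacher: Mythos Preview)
Your proof is correct and follows essentially the same route as the paper: use Proposition~\ref{prop:versur} to get the surjection $F^\ast\twoheadrightarrow M^\ast$, apply Corollary~\ref{cor:gammaissurj} to obtain degreewise surjections $\Gamma^n(F^\ast)\twoheadrightarrow\Gamma^n(M^\ast)$, dualize to get degreewise injections, and invoke Proposition~\ref{prop:grdfalg} for the algebra structure. Your extra paragraph verifying that the algebra map of Proposition~\ref{prop:grdfalg} agrees degreewise with $\hom_A(\Gamma^n(\phi^\ast),A)$ is a reasonable sanity check, but the paper simply takes this as evident from the construction.
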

\begin{proof}
By Proposition~\ref{prop:versur}, we have that $F^\ast\twoheadrightarrow M^\ast$ is surjective. Using that $\Gamma$ preserves surjections by Corollary~\ref{cor:gammaissurj}, we get a graded surjection of algebras $\Gamma(F^\ast)\twoheadrightarrow\Gamma(M^\ast)$, that is, a surjection of modules $\Gamma^n(F^\ast)\twoheadrightarrow \Gamma^n(M^\ast)$ for all $n$. Dualizing these module homomorphisms gives injections $\Gamma^n(M^\ast)^\ast\hookrightarrow \Gamma^n(F^\ast)^\ast$ for all $n$. This shows that the graded $A$-algebra homomorphism $\Gamma(M^\ast)\grd\hookrightarrow\Gamma(F^\ast)\grd$, given by Proposition~\ref{prop:grdfalg}, is injective.
\end{proof}

\begin{theorem}\label{thm:main}
Let $A$ be a noetherian ring and let $M$ be a finitely generated $A$-module. Then,
\begin{equation*}
\fR(M)=\im\big(\sym(M)\to\Gamma(M^\ast)^\vee\big),
\end{equation*}
where the algebra homomorphism $\sym(M)\to\Gamma(M^\ast)^\vee$ is induced by the canonical module homomorphism from $M$ to the degree $1$ part of $\Gamma(M^\ast)^\vee$, which is $M^{\ast\ast}$.
\end{theorem}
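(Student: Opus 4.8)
The plan is to combine the two factorization results that the earlier sections have carefully prepared. Fix a versal map $M\to F$ with $F$ finitely generated and free; such a map exists by the discussion following Proposition~\ref{prop:versur}. By Lemma~\ref{lem:versalgerrees} we have $\fR(M)=\im\bigl(\sym(M)\to\sym(F)\bigr)$, so it suffices to produce a factorization of $\sym(M)\to\sym(F)$ of the form
\[\sym(M)\longrightarrow \Gamma(M^\ast)^\vee \hooklongrightarrow \sym(F),\]
with the second map injective, and then check that the first map is precisely the canonical one from the statement. Granting such a factorization, $\im\bigl(\sym(M)\to\sym(F)\bigr)$ equals $\im\bigl(\sym(M)\to\Gamma(M^\ast)^\vee\bigr)$ because a surjection followed by an injection has the same image as the composite, and we are done.

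First I would build the commutative square of graded $A$-algebra maps
\[\xymatrix{\sym(M)\ar[r]\ar[d]&\sym(F)\ar[d]^{\cong}\\ \Gamma(M^\ast)^\vee\ar[r]&\Gamma(F^\ast)^\vee}\]
as follows. The right vertical isomorphism is Theorem~\ref{thm:symgrdg} applied to the free module $F$ (so $\sym(F)=\sym(F^{\ast\ast})\cong\Gamma(F^\ast)^\vee$). The left vertical map is the canonical map $\sym(M)\to\Gamma(M^\ast)^\vee$ of the statement: note $\Gamma(M^\ast)^\vee$ is a graded $A$-algebra by Proposition~\ref{prop:grdfalg} (since $\Gamma$ preserves colimits, Theorem~\ref{thm:gpcol}, and its multiplication is induced by addition), its degree $1$ part is $(M^\ast)^\ast=M^{\ast\ast}$, and the universal property of $\sym$ turns the canonical $M\to M^{\ast\ast}$ into the desired algebra map. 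The bottom horizontal map is $\Gamma((\cdot)^\ast)^\vee$ applied contravariantly to the versal map $M\to F$, i.e.\ the map $\Gamma(M^\ast)^\vee\to\Gamma(F^\ast)^\vee$ supplied by functoriality in Proposition~\ref{prop:grdfalg} together with the dualization $F^\ast\to M^\ast$; by Lemma~\ref{lem:injektivgdual} this bottom map is \emph{injective}. Commutativity of the square is a naturality check: both composites $\sym(M)\to\sym(F)\cong\Gamma(F^\ast)^\vee$ and $\sym(M)\to\Gamma(M^\ast)^\vee\to\Gamma(F^\ast)^\vee$ are the $\sym$-algebra maps extending the two obviously-equal module maps $M\to F^{\ast\ast}$ (the versal map composed with $F\cong F^{\ast\ast}$, versus $M\to M^{\ast\ast}\to F^{\ast\ast}$), which agree by Lemma~\ref{lem:faktoriserar}; since an algebra map out of a symmetric algebra is determined by its restriction to degree $1$, the square commutes.

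Once the square commutes with the right vertical an isomorphism and the bottom horizontal injective, the composite $\sym(M)\to\Gamma(M^\ast)^\vee\hookrightarrow\Gamma(F^\ast)^\vee\cong\sym(F)$ is exactly the map $\sym(\phi)$ computing $\fR(M)$, and it has been factored through $\Gamma(M^\ast)^\vee$ via a map whose post-composition with an injection recovers $\sym(\phi)$. Hence
\[\fR(M)=\im\bigl(\sym(M)\to\sym(F)\bigr)=\im\bigl(\sym(M)\to\Gamma(M^\ast)^\vee\bigr),\]
the last equality because the injectivity of $\Gamma(M^\ast)^\vee\hookrightarrow\Gamma(F^\ast)^\vee$ means no collapsing happens after the first arrow. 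I would also remark that the resulting description is independent of the chosen versal map $\phi$, since the left vertical map in the square does not mention $\phi$ at all.

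The main obstacle I anticipate is not any single hard estimate but the bookkeeping of \emph{which} canonical maps are being used and verifying the square genuinely commutes: one must be careful that the map $\sym(M)\to\Gamma(M^\ast)^\vee$ from the statement, the functoriality map of Proposition~\ref{prop:grdfalg}, and the identification $\sym(F)\cong\Gamma(F^\ast)^\vee$ of Theorem~\ref{thm:symgrdg} are all compatible on degree $1$ (where everything reduces to the universal double-dual maps $M\to M^{\ast\ast}$, $F\to F^{\ast\ast}$ and the transpose of $\phi$). Because maps out of symmetric algebras are determined in degree $1$, once the degree-$1$ compatibility is in hand the rest is forced; so the real content is Lemma~\ref{lem:injektivgdual} (already proved) plus this degree-$1$ naturality diagram chase.
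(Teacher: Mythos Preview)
Your proposal is correct and follows essentially the same route as the paper's own proof: choose a versal map $M\to F$, use Lemma~\ref{lem:injektivgdual} and Theorem~\ref{thm:symgrdg} to produce the factorization $\sym(M)\to\Gamma(M^\ast)^\vee\hookrightarrow\Gamma(F^\ast)^\vee\cong\sym(F)$, verify via Lemma~\ref{lem:faktoriserar} and the universal property of $\sym$ that the composite is $\sym(\phi)$, and conclude by Lemma~\ref{lem:versalgerrees}. The only cosmetic difference is that you organize the maps as a commutative square whereas the paper draws a triangle.
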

\begin{proof}
Pick a versal map $M\to F$, for some finitely generated and free module $F$. %By Lemma~\ref{lem:versalfakt} this map factorizes as $M\to M^{\ast\ast}\hookrightarrow F$.  
By Lemma~\ref{lem:injektivgdual}, this gives an injective homomorphism $\Gamma(M^\ast)\grd\hookrightarrow\Gamma(F^\ast)\grd$ of graded $A$-algebras. Applying Theorem~\ref{thm:symgrdg} to the dual of $F$ gives a canonical isomorphism $\Gamma(F^\ast)\grd\cong\sym(F)$ of graded $A$-algebras. Furthermore, there is a canonical homomorphism $\sym(M)\to \Gamma(M^\ast)^\vee$ of graded $A$-algebras, given by the canonical map from $M$ to the degree~$1$ part of $\Gamma(M^\ast)^\vee$, which is $M^{\ast\ast}$. Putting all these together gives a map
\begin{equation}\label{eq:compositions}
\sym(M)\to\Gamma(M^\ast)^\vee\hookrightarrow\Gamma(F^\ast)^\vee\cong\sym(F).
\end{equation}
Restricting to the degree $1$ part of this composition of graded algebra homomorphisms gives a composition of module homomorphisms,
\begin{equation}\label{eq:comp2}
M\to M^{\ast\ast}\hookrightarrow F^{\ast\ast}\cong F,
\end{equation}
where the first map is the canonical map from $M$ to its double dual, the second map is the double dual of the versal map $M\to F$, and the isomorphism is the canonical isomorphism between $F$ and its double dual. By Lemma~\ref{lem:faktoriserar}, the composition \eqref{eq:comp2} equals the versal map $M\to F$. Thus, by adjunction, it follows that the map $\sym(M)\to\sym(F)$, given by applying the symmetric algebra functor on $M\to F$, is equal to the composition \eqref{eq:compositions}. Hence, we have a commutative diagram
\begin{equation*}
\begin{split}
\xymatrix{\Gamma(M^\ast)\grd\ar@{^(->}[dr]\\
\sym(M)\ar[u]\ar[r]&\sym(F)}
\end{split}
\end{equation*}
of  graded $A$-algebra homomorphisms, where $\Gamma(M^\ast)^\vee\hookrightarrow\sym(F)$ is injective. Combining this factorization with Lemma~\ref{lem:versalgerrees} gives the desired result.
\end{proof}

\begin{remark}
Note that all the previous steps goes through with the graded dual of the algebra of divided powers replaced with the graded dual of the symmetric algebra, \emph{except} for the algebra isomorphism $\Gamma(F^\ast)^\vee\cong\sym(F)$. There is a \emph{module} isomorphism $\sym(F^\ast)^\vee\cong\sym(F)$, but this is not an isomorphism of algebras since the multiplications are defined differently in these two $A$-algebras.
\end{remark}

By Theorem~\ref{thm:main}, we can give an equivalent definition of the Rees algebra of $M$ as the image of the canonical map $\sym(M)\to\Gamma(M^\ast)\grd$. Using this definition we can give new proofs of earlier results, such as the fact that the map $\fR\colon M\mapsto\fR(M)$ naturally extends to a functor $\moda\to\mathbf{Alg}_A$. Indeed, given a homomorphism $M\to N$, this gives a commutative diagram
\[\xymatrix@R=1.3em{
\sym(M)\ar@{->>}[dr]\ar[rr]\ar[dd]&&\sym(N)\ar@{->>}[dr]\ar[dd]\\
&\fR(M)\ar@{_(->}[dl]&&\fR(N)\ar@{_(->}[dl]\\
\Gamma(M^\ast)\grd\ar[rr]&&\Gamma(N^\ast)\grd}\]
which induces a map $\fR(M)\to\fR(N)$. The other defining properties of a functor follow by similar arguments.

\par\vspace{\baselineskip}
In \cite{jag2}, we show, analogously to Theorem~\ref{thm:main}, that the Rees algebra of a module $M$ can also be recovered from a canonical map of coherent functors.

\bibliography{references}{}
\bibliographystyle{amsalpha}

\end{document}